\DeclareMathOperator{\type}{type}
\DeclareMathOperator{\dom}{dom}
\providecommand{\U}[1]{\protect\rule{.1in}{.1in}}
\newtheorem{theorem}{Theorem}
\newtheorem{mtheorem}{Main Theorem}
\newtheorem{claim}[theorem]{Claim}
\newtheorem{corollary}[theorem]{Corollary}
\newtheorem{definition}[theorem]{Definition}
\newtheorem{lemma}[theorem]{Lemma}
\newtheorem{problem}[theorem]{Problem}
\newtheorem{proposition}[theorem]{Proposition}
\newtheorem{remark}[theorem]{Remark}
\def\genby #1{\langle #1 \rangle}
\date{}
\begin{document}


\title{Forcing a Basis into $\aleph_1$-Free Groups}

\author{Daniel Bossaller, Daniel Herden, Alexandra V. Pasi}
\address{
Department of Mathematics,
Baylor University,
Sid Richardson Building,
1410 S.4th Street,
Waco, TX 76706, USA}
\email{daniel\_bossaller@baylor.edu, alexandra\_pasi@baylor.edu}
\email[Corresponding author]{daniel\_herden@baylor.edu}

\subjclass{Primary 13C10, 20K20, 20K25;
Secondary 03E35, 03E40}
\keywords{$\aleph_1$-free groups, $\Gamma$-invariant, transitive model extensions, forcing}

\begin{abstract}
In this paper, we address the question of when a non-free $\aleph_1$-free group $H$ can be be free in a transitive cardinality-preserving model extension. Using the $\Gamma$-invariant, denoted $\Gamma(H)$, we present a necessary and sufficient condition resolving this question for $\aleph_1$-free groups of cardinality~$\aleph_1$. Specifically, if $\Gamma(H) = [\aleph_1]$, then $H$ will be free in a transitive model extension if
and only if $\aleph_1$ collapses, while for $\Gamma(H) \ne [\aleph_1]$ there exist cardinality-preserving forcings that will add a basis to $H$.
In particular, for $\Gamma(H) \neq [\aleph_1]$, we provide a poset $(\mathcal P_{\rm pb}, \leq)$ of partial bases for adding a basis to $H$ without collapsing~$\aleph_1$.
\end{abstract}

\maketitle

\section{Introduction}

$\aleph_1$-free groups, abelian groups whose countable subgroups are free, are objects of significant algebraic complexity, and the class of $\aleph_1$-free groups exhibits a high degree of diversity. One standardized test of the algebraic complexity of a class is ring realization: Dugas, G{\"o}bel \cite{DG} and Corner, G{\"o}bel \cite{CG} showed that any ring with free additive structure can be realized as the endomorphism ring of some $\aleph_1$-free group. This can be interpreted as a strong statement of algebraic complexity: almost any property that does not outright contradict $\aleph_1$-freeness will be realized by some $\aleph_1$-free group. In particular, we can construct arbitrarily large $\aleph_1$-free groups $A$ such that $\rm{End}(A) \cong \mathbb{Z}$, indicating that the class of $\aleph_1$-free groups is significantly different from the class of free groups, for which every endomorphism is uniquely determined by the images of its basis elements (and thus the endomorphism ring of a rank $\kappa$ free group with $\kappa \ge \aleph_0$ will have size $2^{\kappa}$). For $\aleph_1$-free groups $H$ of size $\aleph_1$, we can characterize the relationship between $\aleph_1$-freeness and freeness algebraically using the $\Gamma$-invariant, denoted $\Gamma(H)$, introduced by Eklof in \cite {Eklof77}. Given an $\aleph_1$-filtration $\bigcup_{\alpha < \aleph_1} H_\alpha$ of $H$, \[\Gamma(H) := \big[\{\alpha < \aleph_1 \mid H/H_{\alpha} \text{ is not } \aleph_1 \text{-free} \}\big], \] where $[\cdot]$ designates the equivalence class defined by intersection with closed unbounded subsets (clubs) of $\aleph_1$. Under this definition, $H$ is free if and only if $\Gamma(H)=[\emptyset]$.

Yet set-theoretically, $\aleph_1$-free groups can be thought of as somewhat simple objects. One important set-theoretic property of $\aleph_1$-freeness which demonstrates this is its absoluteness, see \cite{HP}. In particular, if an abelian group $H$ is $\aleph_1$-free in some transitive model of ZFC, then $H$ is $\aleph_1$-free in any transitive model of ZFC containing $H$. One consequence of the absoluteness of $\aleph_1$-freeness is the following set-theoretic characterization of the relationship between $\aleph_1$-freeness and freeness: an abelian group $H$ is $\aleph_1$-free in a countable transitive model of ZFC if and only if it is free in some model extension. This result is easily obtained by performing a forcing which collapses the cardinality $|H|$ to be countable.

It is properties such as this absoluteness that make $\aleph_1$-free groups particularly susceptible to set-theoretic techniques, and $\aleph_1$-free groups are a common object of independence results. For instance, $\aleph_1$-free groups are instrumental for Shelah's celebrated resolution of the Whitehead problem \cite{Eklof76,S1}. In the wake of Shelah's proof that the existence of non-trivial $\aleph_1$-free Whitehead groups is undecidable from ZFC, set-theoretic prediction principles have become widely accepted tools for $\aleph_1$-free constructions \cite{EM,GT}.

In this paper, we explore the question of when an $\aleph_1$-free abelian group $H$ can be forced to be free. As mentioned, such a forcing can be achieved by collapsing $|H|$ to be countable. However, much of the structure of the ground model is lost in such a cardinal collapse, and we would like to know under what conditions forcing is possible without collapsing $|H|$. For $\aleph_1$-free groups of cardinality $\aleph_1$, we show that such a forcing is possible if and only if $\Gamma(H)\ne [\aleph_1]$. In particular, for $\Gamma(H)\ne [\aleph_1]$, we can add a basis to $H$ by forcing a club into a suitable stationary subset of~$\aleph_1$. Adding clubs to stationary sets and their complements is a well-known task \cite{AS,BHK,Krueger}. The forcing presented by Baumgartner, Harrington, and Kleinberg \cite{BHK} achieves this while preserving $\aleph_1$, while Abraham, Shelah \cite{AS} preserves all cardinals.

The shortcoming of these set-theoretic constructions is that they achieve freeness without explicitly constructing a basis for $H$. To address this, the latter half of the paper is devoted to a novel forcing poset $(\mathcal P_{\rm pb}, \leq)$ which forces freeness by directly adding an independent generating set to $H$. More precisely, we consider the set
\[ \mathcal P_{\rm pb} = \{p \subseteq H \mid p \text{ is linearly independent}, |p| < \aleph_1, \text{ and } H/\langle p \rangle \text{ is } \aleph_1\text{-free}\} \]
ordered under set inclusion. This culminates in the main result of this paper.

\begin{mtheorem} \label{mtheorem}
Let $\mathbf M$ be a transitive model of {\rm ZFC} with \[ \mathbf M \vDash \text{``$H$ is an $\aleph_1$-free group of size $\aleph_1$.\!''}\]
\begin{itemize}
\item[(a)] If $\Gamma(H) = [\aleph_1]$ in $\mathbf M$, then $H$ is free in a transitive model extension $\mathbf N$ of $\mathbf M$ if and only if $\aleph_1\!\!$ collapses.
\item[(b)] If $\mathbf M$ is countable with $\Gamma(H)\ne [\aleph_1]$, then $H$ will be free in some generic cardinality-preserving forcing extension of $\mathbf M$. Moreover, if $G$ is any $\mathcal P_{\rm pb}$-generic filter, then ${\mathbf M}[G]$ adds a basis to $H$ while preserving $\aleph_1$.
\end{itemize}
\end{mtheorem}

Over the years there have been many applications of forcing techniques to abelian group theory \cite{AGS,BS,MaS,MeS,S2}. However, these methods have been directed primarily at using forcing as a tool for establishing independence results. This paper proposes a shift in viewpoint by rather concentrating on the forcing process itself. Indeed, Theorem~\ref{mtheorem} is a natural result in the sense that it
focuses on $(\mathcal P_{\rm pb}, \leq)$ as the most direct forcing poset which solves the task of adding a basis to an $\aleph_1$-free group $H$. In a similar manner, we can tailor minimally invasive forcing posets to alter other properties of $\aleph_1$-free groups, e.g., forcing a direct summand into an indecomposable group. It is our belief that a systematic investigation of the resulting family of forcings will provide a deeper understanding of
$\aleph_1$-free groups as mathematical objects with a double life in algebra and set theory. In addition, this may also help discover new families of $\aleph_1$-free groups with interesting properties. Theorem~\ref{mtheorem} highlights that groups $H$ with $\Gamma(H) = [\aleph_1]$ behave quite differently than all other $\aleph_1$-free groups of size $\aleph_1$. We have christened these groups \emph{turbid} as they are as far away from being free
as is possible with respect to the club filter.

Our paper is structured as follows. In Section \ref{sec:2}, we review the $\Gamma$-invariant and introduce turbid groups. In Section \ref{sec:3.1}, we will add a basis by forcing a club into a stationary set, while Section \ref{sec:3.3} focuses on forcing with partial bases and lists properties of the forcing poset $(\mathcal P_{\rm pb}, \leq)$. Theorem \ref{mtheorem} is a combination of Lemma \ref{mtheorema}, Lemma \ref{not turbid}, Theorem \ref{freeforce}, and Theorem \ref{MainTheorem}.

\section{Groups with Full $\Gamma$-Invariant} \label{sec:2}

In this section, we review some basic properties of the $\Gamma$-invariant and introduce turbid groups as $\aleph_1$-free groups $H$ of cardinality $\aleph_1$ with $\Gamma$-invariant $\Gamma(H)=[\aleph_1]$.

\subsection{The $\Gamma$-Invariant} \label{sec:2.1}

We will define the $\Gamma$-invariant $\Gamma(H)$ of an $\aleph_1$-free group $H$ of cardinality $\aleph_1$ and relate the freeness of $H$ to $\Gamma(H)$. For further details and proofs, see \cite[Section IV.1]{EM}.

We start with $\aleph_1$-filtrations and a club filter equivalence relation on subsets of~$\aleph_1$.

\begin{definition}\label{filtration}
Let $H$ be an abelian group of cardinality $\aleph_1$. An \emph{$\aleph_1$-filtration} of $H$ is a sequence $\{H_{\alpha}\mid \alpha < \aleph_1\}$ of subgroups of $H$ whose union is $H$ and which satisfies for all $\alpha, \beta < \aleph_1$:
\begin{enumerate}
\item[$(i)$] $|H_\alpha|\le\aleph_0$;
\item[$(ii)$] if $\alpha \leq \beta$, then $H_\alpha \subseteq H_\beta$;
\item[$(iii)$] if $\alpha$ is a limit ordinal, then $H_\alpha=\bigcup_{\beta<\alpha}H_\beta$.
\end{enumerate}

Let $I \subseteq \aleph_1$ be closed and bounded $($i.e., $\sup(J) \in I$ for all $J \subseteq I$ and $\sup(I) < \aleph_1).$ Given an $\aleph_1$-filtration, $\mathcal F = \{H_\alpha \mid \alpha < \aleph_1\}$ of $H$, the \emph{partial filtration of $\mathcal F$ induced by $I$}, $\mathcal F_I$, is $\{H_\alpha \mid \alpha \in I\}$. Since $I \subseteq \aleph_1$ is a set of ordinals, it carries some order type $< \aleph_1$; to this end, we define $\type (\mathcal F_I) := \type (I)$.
\end{definition}

\begin{definition}\mbox{}
\begin{itemize}
\item[$(a)$] If $X$ and $Y$ are subsets of $\aleph_1$, we define $X \sim Y$ if there exists some closed and unbounded set (club) $C\subseteq \aleph_1$ such that $X \cap C=Y \cap C$.
This defines an equivalence relation on $\mathcal{P}(\aleph_1)$. Denote the equivalence class of $X$ by~$[X]$.
\item[$(b)$] For $X,Y \subseteq \aleph_1$, we define $[X] \subseteq [Y]$ if there exists some club $C\subseteq \aleph_1$ with $X\cap C \subseteq Y\cap C$. This defines a partial
order of equivalence classes $[X]$ with least element $[\emptyset]$ and greatest element $[\aleph_1]$.
\end{itemize}
\end{definition}

We now define the $\Gamma$-invariant of an $\aleph_1$-free group of cardinality $\aleph_1$.

\begin{definition}\label{gammainvariant}
Let $H$ be an $\aleph_1$-free
group of cardinality $\aleph_1$, and let $\{H_\alpha\mid \alpha< \aleph_1\}$ be an $\aleph_1$-filtration of $H$. Let
\[E=\{\alpha < \aleph_1\mid H/{H_\alpha} \text{ is not } \aleph_1\text{-free} \}.\] We call the equivalence class of $E$, $[E]$, the \emph{$\Gamma$-invariant} of $H$, denoted $\Gamma(H)$.
\end{definition}

Note that the $\Gamma$-invariant does not depend on the chosen filtration $\{H_\alpha\mid \alpha< \aleph_1\}$ of $H$.
The next result relates the freeness of an $\aleph_1$-free group to its $\Gamma$-invariant.

\begin{theorem} \label{freeGamma}
If $H$ is an $\aleph_1$-free group of cardinality $\aleph_1$, then $H$ is free if and only if $\Gamma(H)=[\emptyset]$.
\end{theorem}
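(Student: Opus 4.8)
The plan is to prove the two implications directly from the definitions, relying only on the fact noted above that $\Gamma(H)$ is independent of the chosen $\aleph_1$-filtration, together with the elementary fact that an extension of a group by a free group splits. For the forward direction, suppose $H$ is free. Since $|H| = \aleph_1$, $H$ has a basis $\{x_\alpha \mid \alpha < \aleph_1\}$ of cardinality $\aleph_1$. Setting $H_\alpha := \langle x_\beta \mid \beta < \alpha\rangle$ for $\alpha < \aleph_1$ yields an $\aleph_1$-filtration of $H$, and for every $\alpha$ the quotient $H/H_\alpha$ is free on $\{x_\beta + H_\alpha \mid \alpha \le \beta < \aleph_1\}$, hence $\aleph_1$-free. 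Thus the set $E$ attached to this particular filtration is empty, and by the filtration-independence of $\Gamma$ we conclude $\Gamma(H) = [E] = [\emptyset]$.

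For the converse, suppose $\Gamma(H) = [\emptyset]$ and fix an $\aleph_1$-filtration $\{H_\alpha \mid \alpha < \aleph_1\}$ with associated set $E$; by hypothesis there is a club $C \subseteq \aleph_1$ with $C \cap E = \emptyset$. Write $C = \{c_\alpha \mid \alpha < \aleph_1\}$ in increasing enumeration and set $H'_\alpha := H_{c_\alpha}$. The first step is to check that $\{H'_\alpha \mid \alpha < \aleph_1\}$ is again an $\aleph_1$-filtration of $H$: conditions $(i)$ and $(ii)$ are immediate, continuity at limit ordinals follows because the increasing enumeration of a club is continuous and the original filtration is continuous, and $\bigcup_\alpha H'_\alpha = H$ since $C$ is unbounded. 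Now for each $\alpha$ we have $c_\alpha \notin E$, so $H/H'_\alpha = H/H_{c_\alpha}$ is $\aleph_1$-free; in particular its countable subgroup $H'_{\alpha+1}/H'_\alpha$ is free.

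It then remains to assemble a basis of $H$ by transfinite recursion, building an increasing chain of subsets $B_\alpha \subseteq H'_\alpha$ with each $B_\alpha$ a basis of $H'_\alpha$. The group $H'_0$ is a countable $\aleph_1$-free, hence free, group, so we may take any basis $B_0$. At a successor step, the short exact sequence $0 \to H'_\alpha \to H'_{\alpha+1} \to H'_{\alpha+1}/H'_\alpha \to 0$ splits because $H'_{\alpha+1}/H'_\alpha$ is free; lifting a basis of the quotient to a set $Y \subseteq H'_{\alpha+1}$ and putting $B_{\alpha+1} := B_\alpha \cup Y$ gives a basis of $H'_{\alpha+1}$ extending $B_\alpha$. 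At a limit ordinal $\lambda$, put $B_\lambda := \bigcup_{\alpha<\lambda} B_\alpha$; this set generates $H'_\lambda = \bigcup_{\alpha<\lambda} H'_\alpha$ and is independent since any nontrivial relation involves only finitely many elements and hence already lies in some $B_\alpha$. Finally $B := \bigcup_{\alpha<\aleph_1} B_\alpha$ is a basis of $H = \bigcup_\alpha H'_\alpha$, so $H$ is free.

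I do not expect a serious obstacle here, as this is a classical fact; the only points that require genuine care are the verification that passing to the subfiltration indexed by the club $C$ again produces an $\aleph_1$-filtration — in particular continuity at limit ordinals — and the bookkeeping in the transfinite recursion, so that the bases $B_\alpha$ are chosen coherently enough for their unions at limit stages and at the top to be bases rather than merely generating sets.
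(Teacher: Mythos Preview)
Your argument is correct and is precisely the classical one. The paper itself does not supply a proof of this theorem; it merely records the statement and refers the reader to \cite[Section IV.1]{EM} for details. What you have written is essentially the argument found there: for the forward direction one exhibits a filtration along a basis so that $E=\emptyset$, and for the converse one restricts the filtration to a club disjoint from $E$, obtains free successive quotients, and builds a basis by transfinite recursion using the splitting of extensions by free groups. The two points you flag as requiring care---continuity of the club-indexed subfiltration at limit ordinals, and coherence of the chosen partial bases at limits---are exactly the places where one has to be attentive, and you handle both correctly.
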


This result and corresponding definitions generalize in a straightforward way to $\leq\!\! \kappa$-generated $\kappa$-free modules $M$, for $\kappa$ a regular uncountable cardinal, with the $\Gamma$-invariant $\Gamma(M)$ defined with respect to a $\kappa$-filtration of $M$.

\subsection{Turbid Groups} \label{sec:2.2}

We have seen that groups $H$ with empty $\Gamma$-invariant $\Gamma(H)=[\emptyset]$ coincide with free groups.
This raises the natural question what happens at the other end of the spectrum, when we have a full $\Gamma$-invariant $\Gamma(H)=[\aleph_1]$.
Intuitively, such groups $H$ will be as far away as possible from being free, and we will call these groups turbid.

\begin{definition}
An $\aleph_1$-free group $H$ of cardinality $\aleph_1$ is called \emph{turbid} if $\Gamma(H) = \left[\aleph_1\right]$.
\end{definition}

We have the following immediate alternative characterization of turbid groups.

\begin{corollary} \label{charturbid}
Let $H$ be an $\aleph_1$-free group of cardinality $\aleph_1$ and let $\{H_\alpha \mid \alpha < \aleph_1\}$ be an $\aleph_1$-filtration of $H$. Then the following holds:
\begin{itemize}
\item[(a)] $H$ is free if and only if $\aleph_1-E$ contains a club.
\item[(b)] $H$ is turbid if and only if $E$ contains a club.
\end{itemize}

\begin{proof}
$H$ is turbid iff $\Gamma(H) = \left[\aleph_1\right]$. This is the case if and only if there exists a club $C$ with $E \cap C = \aleph_1 \cap C = C$, and thus $C \subseteq E$. For part (a) see Theorem~\ref{freeGamma}.
\end{proof}
\end{corollary}

Given any stationary subset $E \subseteq \aleph_1$, \cite[Section IV.0]{EM} gives a construction for some (strongly) $\aleph_1$-free group $H$ of cardinality $\aleph_1$ such that $\Gamma(H) = [E]$. In particular, for $E= \aleph_1$, this provides the existence of an $\aleph_1$-free turbid group. For a more familiar example of a turbid group, one may consult the Baer-Specker group. Recall that the Baer-Specker group is the countable product of copies of $\mathbb Z$, which we will denote by $P = \prod_\omega \mathbb Z$. It is evident that $P$ has size $2^{\aleph_0}$. Moreover, it is well known that $P$ fails to be free but is $\aleph_1$-free \cite{Baer37, specker50}. The interested reader may consult \cite{EM} for a further exploration of $P$. We extend one of those results here.

\begin{lemma}{$(\operatorname{ZFC} + \operatorname{CH})$}
The Baer-Specker group is turbid.
\end{lemma}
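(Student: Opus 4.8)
The plan is to exhibit an explicit $\aleph_1$-filtration of the Baer--Specker group $P = \prod_\omega \mathbb Z$ and show, using CH, that the set $E$ of ``bad'' indices contains a club, so that turbidity follows from Corollary~\ref{charturbid}(b). Since $|P| = 2^{\aleph_0} = \aleph_1$ under CH, enumerate $P = \{x_\xi \mid \xi < \aleph_1\}$ and set $P_\alpha := \langle x_\xi \mid \xi < \alpha \rangle_*$, the \emph{purification} (relative closure) in $P$ of the subgroup generated by the first $\alpha$ elements; each $P_\alpha$ is countable, the chain is continuous and increasing with union $P$, so $\{P_\alpha \mid \alpha < \aleph_1\}$ is an $\aleph_1$-filtration. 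Because the $\Gamma$-invariant is independent of the filtration, it suffices to work with this one.

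The heart of the argument is to show that $P/P_\alpha$ fails to be $\aleph_1$-free for a club of $\alpha$; in fact I expect it fails for \emph{every} $\alpha < \aleph_1$, which is even stronger. The key structural input is the classical fact that $P$ is not free (Baer, Specker) together with the Balcerzyk--N\"obeling-type analysis of quotients of $P$: for any countable pure subgroup $K$ of $P$, the quotient $P/K$ again contains a copy of $P$ (or at least contains a non-free $\aleph_1$-free subgroup, or a copy of $\mathbb Z^\omega$ modulo something small), hence is not $\aleph_1$-free. More concretely, since $P_\alpha$ is countable it involves only ``finitely much information in each coordinate'' in a suitable sense, and one can find a direct summand decomposition $P \cong \prod_\omega \mathbb Z \oplus \prod_\omega \mathbb Z$ where $P_\alpha$ embeds into the first factor; then $P/P_\alpha$ surjects onto the second factor $\prod_\omega \mathbb Z$, which is not $\aleph_1$-free — wait, $\prod_\omega\mathbb Z$ \emph{is} $\aleph_1$-free, so that particular surjection does not immediately help. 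Instead, the right move is: $P/P_\alpha$ is $\aleph_1$-free if and only if $P_\alpha$ is a free factor-like pure subgroup with $P/P_\alpha$ having all countable subgroups free, and we show directly that $P/P_\alpha$ has a non-free countable (or $\aleph_1$-generated) subgroup. This is where I'd invoke that every such quotient of $P$ by a countable pure subgroup still maps onto a Baer--Specker group in a way that is ``not free'' — precisely, one uses that the natural map $P \to P/P_\alpha$ is not split (since $P$ is not free but $P_\alpha$ is, and a splitting would realize $P$ as $P_\alpha \oplus (P/P_\alpha)$ forcing structure incompatible with the slenderness/non-freeness balance), and combine this with the Pontryagin criterion to locate a finite-rank pure subgroup of $P/P_\alpha$ that is not free.

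Let me restructure the cleanest route: I would use the known result (see \cite{EM}, or the standard theory of the Specker group) that $\Gamma(P) \neq [\emptyset]$, i.e. $P$ is not free, \emph{together with} the homogeneity of $P$ under shifts — $P \cong \prod_{n \geq m} \mathbb Z$ for every $m$, and more importantly $P/F \cong P$ for suitable finitely generated free $F$, and iterating, $P$ modulo any countable ``coordinate-finite'' subgroup stays isomorphic to $P$. The purification $P_\alpha$ of a countable subgroup need not be coordinate-finite, but here is the point: if $P/P_\alpha$ \emph{were} $\aleph_1$-free for club-many $\alpha$, then by Theorem~\ref{freeGamma} we would get $\Gamma(P) \neq [\aleph_1]$, and one then derives that $P$ would be ``almost free in the weak sense'' — but the real contradiction must come from showing $P$ cannot have $\Gamma(P) < [\aleph_1]$. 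So the actual proof obligation is: \emph{assume} there is a club $C$ with $P/P_\alpha$ $\aleph_1$-free for all $\alpha \in C$, and derive that $P$ is free, contradicting Baer--Specker. This follows from the standard ``$\Gamma$-invariant $[\emptyset]$ implies free'' machinery only if $C \subseteq \aleph_1 - E$, i.e. the complement of $E$ contains a club, which by Corollary~\ref{charturbid}(a) means $P$ is free — contradiction. Hence no such club exists, so $\aleph_1 - E$ is non-stationary, so $E$ contains a club (as $\aleph_1$-free-ness of the quotient is the relevant dichotomy and the complement of a set containing no club... ) — here I need the extra fact that for the Specker group the set $E$ is not merely stationary but co-non-stationary-complemented, i.e. actually contains a club; this is the subtle point and is exactly where CH is used, via the explicit combinatorics of the filtration by purifications.

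The main obstacle, therefore, is upgrading ``$P$ is not free'' (equivalently $E$ is stationary) to ``$E$ contains a club'' (turbidity). I expect this to require a hands-on argument with the filtration by purifications: show that for each $\alpha$, the purification $P_\alpha$ fails to be a ``$\Gamma$-good'' point by directly exhibiting, inside $P/P_\alpha$, a finite-rank pure non-free subgroup — e.g., the image of a suitable element $z = (z_n) \in P$ with $z_n$ growing fast enough (like $z_n = n!$) relative to the finitely many ``constraints'' that $P_\alpha$ imposes in each coordinate block, so that $z + P_\alpha$ together with its ``divided'' translates generates a non-free rank-one-ish subgroup mimicking the classical non-freeness witness $(\prod \mathbb Z)/(\bigoplus \mathbb Z)$-style obstruction. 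Under CH the enumeration can be arranged so that these witnesses appear at \emph{every} $\alpha$, giving $E = \aleph_1 \supseteq$ a club. I would write the final proof by: (1) fix the CH enumeration and the purification filtration; (2) prove the Lemma that $P/K$ is never $\aleph_1$-free for $K$ a countable pure subgroup of $P$, citing or adapting the classical non-freeness proof; (3) conclude $E = \aleph_1$, hence $E$ trivially contains the club $\aleph_1$; (4) apply Corollary~\ref{charturbid}(b). Step (2) is the real content and the only place genuine work is needed.
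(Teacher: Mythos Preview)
Your final plan has the right shape, but step~(2) as stated is false, and this is where the gap lies. The claim ``$P/K$ is never $\aleph_1$-free for $K$ a countable pure subgroup of $P$'' fails already for $K=\mathbb Z e_0$ (or any finite-rank direct summand): then $P/K\cong\prod_{n\ge 1}\mathbb Z$ is again Baer--Specker and hence $\aleph_1$-free. So you cannot get $E=\aleph_1$ on the nose, and your hands-on construction of a ``fast-growing $z$'' witnessing non-$\aleph_1$-freeness of $P/P_\alpha$ will not succeed for every $\alpha$. Several of your intermediate attempts already stumble on this same point (you noticed that ``$\prod_\omega\mathbb Z$ \emph{is} $\aleph_1$-free'', which should have been a red flag for the general claim).

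What actually makes the argument go through is the notion of \emph{strong} $\aleph_1$-freeness. The paper's proof is a two-line citation: $P$ is not strongly $\aleph_1$-free (\cite[Theorem~IV.2.8]{EM}), which by definition means there exists a countable subgroup $S\subseteq P$ such that \emph{no} countable $K\supseteq S$ has $P/K$ $\aleph_1$-free. For any $\aleph_1$-filtration $\{H_\alpha\}$, once $\alpha$ is large enough that $S\subseteq H_\alpha$, we have $\alpha\in E$; hence $\aleph_1-E$ is bounded, so $E$ contains a tail, which is a club, and Corollary~\ref{charturbid}(b) finishes. Your step~(2) should therefore be replaced by the correct (and weaker) assertion that such a bad $S$ exists --- your $(n!)$-style element is indeed the classical witness when $S=\bigoplus_\omega\mathbb Z$ --- and your conclusion~(3) weakened from ``$E=\aleph_1$'' to ``$E$ contains a tail'', which is all that is needed.
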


\begin{proof}
The assumption of the Continuum Hypothesis assures that $P$ has cardinality $2^{\aleph_0} = \aleph_1$. By \cite[Theorem IV.2.8]{EM}, $P$ fails to be strongly $\aleph_1$-free. In particular, for any filtration $\{H_\alpha \mid \alpha \in \aleph_1\}$ of $P$, the complement $\aleph_1 - E$ is bounded. Thus $\Gamma(P) = [\aleph_1]$ and $P$ is turbid.
\end{proof}
A forthcoming article \cite{BHP2} by the authors will further explore properties of the Baer-Specker group $P = \prod_\omega \mathbb Z$ which follow from its turbidity.

Of interest to our main result Theorem \ref{mtheorem} is the following result that turbid groups cannot be made free in a transitive model extension without cardinal collapse.

\begin{lemma} \label{mtheorema}
Let $\mathbf{M}$ be a transitive model of {\rm ZFC} and $H\in \mathbf{M}$ a turbid group. Suppose that $\mathbf{N}$ is a transitive model of {\rm ZFC} containing $\mathbf{M}$. Then $H$ is free in $\mathbf{N}$ if and only if $\aleph_1^{\mathbf{N}} \neq \aleph_1^{\mathbf{M}}$. Moreover, if $\aleph_1^{\mathbf{N}} = \aleph_1^{\mathbf{M}}$, then $H$ will remain turbid in $\mathbf{N}$.

\begin{proof}
It is evident that if $\aleph_1^{\mathbf{M}} \neq \aleph_1^{\mathbf{N}}$, then the extension $\mathbf N$ collapses $\aleph_1$ to $\aleph_0$. In this case, $H$ in $\mathbf N$ is a countable subgroup of itself, and is free.

For the other direction, assume $\aleph_1^{\mathbf{M}} = \aleph_1^{\mathbf{N}}$. Then if $\{H_\alpha \mid \alpha < \aleph_1\}$ is an $\aleph_1$-filtration of $H$ in $\mathbf{M}$, it is also an $\aleph_1$-filtration of $H$ in $\mathbf{N}$. Moreover, by the absoluteness of $\aleph_1$-freeness,
\[ E := \{ \alpha < \aleph_1\mid H/{H_\alpha} \text{ is not } \aleph_1\text{-free} \}^{\mathbf{M}}=\{ \alpha < \aleph_1\mid H/{H_\alpha} \text{ is not } \aleph_1\text{-free} \}^{\mathbf{N}}, \]
and thus, $\Gamma(H)=[E]$ holds both in $\mathbf{M}$ and $\mathbf{N}$.
In particular, as $H$ is turbid in $\mathbf{M}$, there exists a club $C$ in $\mathbf{M}$ with $C \subseteq E$.
Note that $C$ is also a club in $\mathbf{N}$. Thus, $H$ remains turbid.
\end{proof}
\end{lemma}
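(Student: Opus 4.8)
The plan is to treat the equivalence one direction at a time, establishing the harder direction together with the ``moreover'' clause as a single contrapositive statement.

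\emph{If $\aleph_1^{\mathbf N}\neq\aleph_1^{\mathbf M}$, then $H$ is free in $\mathbf N$.} Because $\mathbf M\subseteq\mathbf N$, every cardinal of $\mathbf N$ is a cardinal of $\mathbf M$ (a bijection witnessing the contrary would already live in $\mathbf M$), so $\aleph_1^{\mathbf N}\ge\aleph_1^{\mathbf M}$; hence the hypothesis forces $\aleph_1^{\mathbf M}$ to be a countable ordinal in $\mathbf N$, i.e.\ $\aleph_1$ is collapsed. Since $|H|=\aleph_1^{\mathbf M}$ in $\mathbf M$, the group $H$ is countable in $\mathbf N$. By the absoluteness of $\aleph_1$-freeness \cite{HP}, $H$ remains $\aleph_1$-free in $\mathbf N$, and being a countable subgroup of itself it is therefore free there. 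This is exactly the collapsing argument mentioned in the introduction.

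\emph{If $\aleph_1^{\mathbf N}=\aleph_1^{\mathbf M}$, then $H$ is turbid, hence not free, in $\mathbf N$} (this yields the ``moreover'' clause and, by contraposition, the remaining direction of the equivalence). I would fix an $\aleph_1$-filtration $\{H_\alpha\mid\alpha<\aleph_1\}$ of $H$ lying in $\mathbf M$ and check that, once the value of $\aleph_1$ is pinned down, conditions $(i)$--$(iii)$ of Definition~\ref{filtration} transfer to $\mathbf N$: countability of each $H_\alpha$ is upward absolute, the inclusions $H_\alpha\subseteq H_\beta$ and the continuity equalities $H_\lambda=\bigcup_{\beta<\lambda}H_\beta$ are absolute as statements about sets already in $\mathbf M$, and the index set $\aleph_1$ is unchanged; so the same sequence is an $\aleph_1$-filtration of $H$ in $\mathbf N$. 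Applying the absoluteness of $\aleph_1$-freeness to each quotient $H/H_\alpha$ then shows that
\[ E=\{\alpha<\aleph_1\mid H/H_\alpha\text{ is not }\aleph_1\text{-free}\} \]
is computed identically in $\mathbf M$ and in $\mathbf N$.

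The final step is to carry the witnessing club across. Since $H$ is turbid in $\mathbf M$, Corollary~\ref{charturbid}(b) provides a club $C\subseteq\aleph_1$ with $C\subseteq E$, and $C\in\mathbf M\subseteq\mathbf N$. Closedness of $C$ involves only ordinals below $\aleph_1$ and is absolute, while unboundedness of $C$ in $\aleph_1$ persists because $\aleph_1^{\mathbf M}=\aleph_1^{\mathbf N}$; thus $C$ is a club of $\mathbf N$ contained in $E$, and Corollary~\ref{charturbid}(b) read inside $\mathbf N$ gives that $H$ is turbid in $\mathbf N$. In particular $\Gamma(H)=[\aleph_1]\neq[\emptyset]$ there, so $H$ is not free in $\mathbf N$ by Theorem~\ref{freeGamma}. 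The only genuinely delicate ingredients are the absoluteness inputs---the cited absoluteness of $\aleph_1$-freeness, and the observation that once $\aleph_1$ is fixed the notions ``club'' and ``$\aleph_1$-filtration'' become absolute between the two models---and everything else is routine bookkeeping.
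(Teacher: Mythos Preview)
Your proof is correct and follows essentially the same approach as the paper's: both directions hinge on the absoluteness of $\aleph_1$-freeness, the persistence of the filtration and the set $E$, and the transfer of the witnessing club $C$ when $\aleph_1$ is preserved. You simply supply more explicit justification for the absoluteness steps (why $\aleph_1^{\mathbf N}\ge\aleph_1^{\mathbf M}$, why the filtration conditions and the club property transfer), which the paper leaves implicit.
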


\begin{remark}
Given an $\aleph_1$-free group $H$ with $\aleph_1$-filtration $\{H_\alpha \mid \alpha < \aleph_1\}$ and a transitive model extension with $\aleph_1^{\mathbf{N}} = \aleph_1^{\mathbf{M}}$, we note that while $E^{\mathbf{N}} = E^{\mathbf{M}}$ is true, $[E]^{\mathbf{N}} = [E]^{\mathbf{M}}$ may still fail.
\end{remark}

In the next section we demonstrate that if an $\aleph_1$-free group is not turbid, it can be forced to be free while preserving cardinals. Thus, we will have a characterization of turbid groups as those groups that cannot be made free without cardinal collapse.

\section{Forcing with Cardinal Preservation} \label{sec:3}

We have seen that an $\aleph_1$-free group $H$ of cardinality $\aleph_1$ is free if and only if $\aleph_1 -E$ contains a club. So, if we wish to force a non-free $\aleph_1$-free group $H$ of cardinality $\aleph_1$ to become free, we must force a club into $\aleph_1 - E$. We wish to do so while preserving $\aleph_1$.

\subsection{Forcing a Club into a Stationary Set} \label{sec:3.1}

Baumgartner, Harrington, and Kleinberg \cite{BHK} describe a forcing which, for any stationary subset $A$ of $\aleph_1$, forces a club into $A$. Furthermore, this forcing preserves reals and the cardinality of~$\aleph_1$. In particular, if $\mathbf{M}$ is a countable transitive model of {\rm ZFC}, and $A \subseteq \aleph_1$ is stationary, then there exists a generic extension $\mathbf{N}$ of $\mathbf{M}$ which has the same reals as $\mathbf{M}$ and in which there exists some club $C$ with $C \subseteq A$. The poset for this forcing consists of all closed subsets $p$ of $A$ of successor order type with $q \leq p$ if and only if $p$ is a subset of $q$ and $\min(q-p) > \max p$.

Alternatively, we may use a forcing by Abraham and Shelah \cite[Theorem 3]{AS}. The poset which describes this forcing has cardinality $\aleph_1$ and thus preserves all cardinalities. However, it does not preserve reals.

If $\Gamma(H) \neq [\aleph_1]$, then both of these forcings make $H$ free while preserving $\aleph_1$.

\begin{lemma}\label{not turbid}
Let $\mathbf{M}$ be a countable transitive model of {\rm ZFC} and in $\mathbf{M}$, let $H$ be an $\aleph_1$-free group of cardinality $\aleph_1$ with $\Gamma(H) \neq [\aleph_1]$. Then there exists a cardinality-preserving generic extension $\mathbf{N}$ of $\mathbf{M}$ with $H$ free in $\mathbf{N}$.

\begin{proof}
Let $\{H_\alpha \mid \alpha < \aleph_1\}$ be an $\aleph_1$-filtration of $H$ in  $\mathbf{M}$.
If $\aleph_1-E$ is not stationary, then there exists some club $C$ with $C \subseteq E$. Thus, $H$ is turbid, a contradiction.

So, $\aleph_1-E$ is stationary, and thus we can use \cite{AS} to produce a cardinality-preserving generic extension $\mathbf{N}$ of $\mathbf{M}$ in which $\aleph_1 -E$ contains a club.
As $E^{\mathbf{N}} = E^{\mathbf{M}}$, as seen in the proof of Lemma \ref{mtheorema}, $H$ is free in $\mathbf{N}$.
\end{proof}
\end{lemma}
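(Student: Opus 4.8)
The plan is to reduce the statement to a known dichotomy about $\Gamma$-invariants and then invoke one of the club-forcing constructions cited in the preceding paragraphs. First I would fix an $\aleph_1$-filtration $\{H_\alpha \mid \alpha < \aleph_1\}$ of $H$ in $\mathbf{M}$ and set $E = \{\alpha < \aleph_1 \mid H/H_\alpha \text{ is not } \aleph_1\text{-free}\}$, so that $\Gamma(H) = [E]$. The hypothesis $\Gamma(H) \neq [\aleph_1]$ means, by definition of the club-filter equivalence, that $\aleph_1 - E$ meets every club, i.e.\ $\aleph_1 - E$ is stationary; equivalently, by Corollary~\ref{charturbid}(b), $E$ does not contain a club. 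I would state this as the first step, perhaps by contraposition: if $\aleph_1 - E$ were non-stationary, there would be a club $C \subseteq E$, making $H$ turbid, contrary to assumption.

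The second step is to apply the Abraham--Shelah forcing \cite[Theorem~3]{AS} to the stationary set $\aleph_1 - E \in \mathbf{M}$. Since $\mathbf{M}$ is a countable transitive model, this yields a generic extension $\mathbf{N} = \mathbf{M}[G]$ in which there is a club $C$ with $C \subseteq \aleph_1 - E$, and since the poset has size $\aleph_1$ in $\mathbf{M}$ it preserves all cardinals, so $\mathbf{N}$ is cardinality-preserving over $\mathbf{M}$. (Alternatively one could cite the Baumgartner--Harrington--Kleinberg poset \cite{BHK}, which preserves $\aleph_1$ and reals; either suffices for the conclusion stated.)

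The third step is to transfer the witness of freeness from $\mathbf{N}$ back through the $\Gamma$-invariant machinery. Because cardinals are preserved, $\aleph_1^{\mathbf{N}} = \aleph_1^{\mathbf{M}}$, so the same sequence $\{H_\alpha \mid \alpha < \aleph_1\}$ remains an $\aleph_1$-filtration of $H$ in $\mathbf{N}$; and by absoluteness of $\aleph_1$-freeness (as recorded in the proof of Lemma~\ref{mtheorema}), the set $E$ computed in $\mathbf{N}$ equals the set $E$ computed in $\mathbf{M}$. Hence in $\mathbf{N}$ we have a club $C \subseteq \aleph_1 - E$, so $\aleph_1 - E$ contains a club in $\mathbf{N}$, and Corollary~\ref{charturbid}(a) (equivalently Theorem~\ref{freeGamma}) gives that $H$ is free in $\mathbf{N}$.

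I do not anticipate a serious obstacle here: the argument is essentially an assembly of Corollary~\ref{charturbid}, the absoluteness of $\aleph_1$-freeness, and the external citation for club-into-stationary-set forcing. The one point requiring care is making explicit that the relevant forcing is to be applied to $\aleph_1 - E$ rather than to $E$, and that the cardinality of the Abraham--Shelah poset is what guarantees cardinal preservation; I would also take care to note that the remark following Lemma~\ref{mtheorema} warns $[E]$ itself can change, but this is harmless since we only need $\aleph_1 - E$ to acquire a club, not to control $[E]$ exactly.
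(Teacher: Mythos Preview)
Your proposal is correct and follows essentially the same route as the paper's own proof: fix a filtration, argue by contraposition that $\aleph_1 - E$ is stationary, apply the Abraham--Shelah forcing to obtain a cardinality-preserving extension in which $\aleph_1 - E$ contains a club, and then use the absoluteness of $E$ (from the proof of Lemma~\ref{mtheorema}) together with Corollary~\ref{charturbid}(a) to conclude that $H$ is free. The paper's version is terser and omits your parenthetical remarks about \cite{BHK} and about $[E]$ possibly changing, but there is no substantive difference.
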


Combining Lemmas \ref{mtheorema} and \ref{not turbid} gives the following necessary and sufficient condition under which an $\aleph_1$-free group of cardinality $\aleph_1$ can be forced to be free while preserving $\aleph_1$.

\begin{theorem}
Let $\mathbf{M}$ be a countable transitive model of {\rm ZFC} and $H$ an $\aleph_1$-free group of cardinality $\aleph_1$ in $\mathbf{M}$. Then there exists some transitive model extension $\mathbf{N}$ of $\mathbf{M}$ in which $\aleph_1$ is preserved and $H$ is free if and only if $\Gamma(H) \neq [\aleph_1]$ in $\mathbf{M}$.
\end{theorem}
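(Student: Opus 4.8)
The plan is to derive this equivalence by combining the two lemmas that have just been proved, so the ``proof'' is essentially a bookkeeping exercise that matches up hypotheses and conclusions. First I would address the backward direction: assume $\Gamma(H) \neq [\aleph_1]$ in $\mathbf{M}$. Then Lemma~\ref{not turbid} applies directly and produces a cardinality-preserving (in particular $\aleph_1$-preserving) generic extension $\mathbf{N}$ of $\mathbf{M}$ in which $H$ is free. This handles one implication immediately with no extra work.

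For the forward direction I would argue by contraposition: suppose $\Gamma(H) = [\aleph_1]$ in $\mathbf{M}$, i.e.\ $H$ is turbid, and suppose $\mathbf{N} \supseteq \mathbf{M}$ is a transitive model extension in which $\aleph_1$ is preserved, meaning $\aleph_1^{\mathbf{N}} = \aleph_1^{\mathbf{M}}$. By Lemma~\ref{mtheorema}, since $\aleph_1$ does not collapse, $H$ remains turbid in $\mathbf{N}$; in particular $\Gamma(H) = [\aleph_1] \neq [\emptyset]$ in $\mathbf{N}$, so by Theorem~\ref{freeGamma} (applied inside $\mathbf{N}$, where $H$ is still an $\aleph_1$-free group of cardinality $\aleph_1$) $H$ is not free in $\mathbf{N}$. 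Thus no $\aleph_1$-preserving extension makes $H$ free, which is the contrapositive of what we want.

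One small point I would be careful about is that the statement of the theorem asserts the existence of \emph{some} transitive model extension $\mathbf{N}$ (not necessarily a forcing extension) in the ``if and only if,'' whereas Lemma~\ref{not turbid} produces a \emph{generic} extension. This is not an obstacle: a generic extension is in particular a transitive model extension, so the witness supplied by Lemma~\ref{not turbid} suffices for the backward direction, and the forward direction quantifies universally over all transitive model extensions, which is exactly the generality in which Lemma~\ref{mtheorema} is stated. I would also note that ``cardinality-preserving'' in Lemma~\ref{not turbid} is stronger than the ``$\aleph_1$ preserved'' required here, so nothing is lost.

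The main obstacle, such as it is, is purely expository: making sure the quantifiers line up (existential witness from Lemma~\ref{not turbid} on one side, universal statement from Lemma~\ref{mtheorema} on the other) and that the countability hypothesis on $\mathbf{M}$, needed for the forcing in Lemma~\ref{not turbid}, is carried along. Since the theorem already assumes $\mathbf{M}$ is a countable transitive model, there is nothing further to check, and the proof reduces to the two-line combination described above.
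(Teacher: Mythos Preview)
Your proposal is correct and matches the paper's approach exactly: the paper states this theorem immediately after Lemmas~\ref{mtheorema} and~\ref{not turbid} with the one-line justification ``Combining Lemmas~\ref{mtheorema} and~\ref{not turbid} gives the following\ldots'' and no further proof. Your careful bookkeeping of the quantifiers and the observation that a generic extension is in particular a transitive model extension are precisely the points one would spell out if writing the proof in full.
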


Given any $\aleph_1$-free group $H$ of cardinality $\aleph_1$, we might equally well use forcing to add a club to $E$. This makes for a noteworthy variation on Lemma \ref{not turbid}.

\begin{lemma}
Let $\mathbf{M}$ be a countable transitive model of {\rm ZFC} and in $\mathbf{M}$, let $H$ be an $\aleph_1$-free group of cardinality $\aleph_1$ that is neither free nor turbid. Then there exist cardinality-preserving generic extension $\mathbf{N}_1$ and $\mathbf{N}_2$ of $\mathbf{M}$ with $H$ free in $\mathbf{N}_1$ and turbid in $\mathbf{N}_2$.
\end{lemma}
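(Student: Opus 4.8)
The plan is to follow the proof of Lemma~\ref{not turbid} but to apply the club-adding forcing of Abraham and Shelah \cite[Theorem 3]{AS} in two different ways, once to $\aleph_1-E$ and once to $E$, obtaining two separate extensions. First I would fix an $\aleph_1$-filtration $\{H_\alpha\mid\alpha<\aleph_1\}$ of $H$ in $\mathbf M$ and put $E=\{\alpha<\aleph_1\mid H/H_\alpha\text{ is not }\aleph_1\text{-free}\}$. The first step is to record that both $E$ and $\aleph_1-E$ are stationary in $\mathbf M$: since $H$ is not free, Corollary~\ref{charturbid}(a) shows $\aleph_1-E$ contains no club, so $E$ is stationary; since $H$ is not turbid, Corollary~\ref{charturbid}(b) shows $E$ contains no club, so $\aleph_1-E$ is stationary.

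For $\mathbf N_1$: apply \cite[Theorem 3]{AS} to the stationary set $\aleph_1-E$ to obtain a cardinality-preserving generic extension $\mathbf N_1$ of $\mathbf M$ in which $\aleph_1-E$ contains a club. Exactly as in the proof of Lemma~\ref{mtheorema}, cardinality preservation guarantees that $\{H_\alpha\}$ is still an $\aleph_1$-filtration of $H$ in $\mathbf N_1$ and, by the absoluteness of $\aleph_1$-freeness, $E^{\mathbf N_1}=E^{\mathbf M}$; hence $\aleph_1-E$ still contains a club in $\mathbf N_1$, and so $H$ is free there by Corollary~\ref{charturbid}(a). For $\mathbf N_2$: apply \cite[Theorem 3]{AS} instead to the stationary set $E$, obtaining a cardinality-preserving extension $\mathbf N_2$ of $\mathbf M$ in which $E=E^{\mathbf N_2}$ contains a club. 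Since $H$ remains $\aleph_1$-free of cardinality $\aleph_1$ in $\mathbf N_2$ (absoluteness together with cardinality preservation), Corollary~\ref{charturbid}(b) gives that $H$ is turbid in $\mathbf N_2$.

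I do not expect a genuine obstacle: all of the technical content is already packaged in \cite{AS}, in Corollary~\ref{charturbid}, and in the computation $E^{\mathbf N}=E^{\mathbf M}$ carried out for Lemma~\ref{mtheorema}. The one point worth a sentence is that $\mathbf N_1$ and $\mathbf N_2$ must be two \emph{different} extensions of $\mathbf M$: no single extension can contain clubs in both $E$ and $\aleph_1-E$, since their intersection would be a (necessarily nonempty) club contained in $E\cap(\aleph_1-E)=\emptyset$. One should also be careful to verify that $E$, and not merely $\aleph_1-E$, is stationary in $\mathbf M$ before invoking \cite{AS} the second time, which is precisely what the non-turbidity hypothesis supplies.
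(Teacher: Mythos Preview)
Your proposal is correct and is exactly the argument the paper has in mind: the lemma is stated there without proof, introduced as ``a noteworthy variation on Lemma~\ref{not turbid}'' obtained by ``equally well us[ing] forcing to add a club to $E$.'' One small slip in your closing commentary: it is the non-\emph{freeness} hypothesis that makes $E$ stationary (non-turbidity makes $\aleph_1-E$ stationary); you have this right in your first paragraph but reversed it in the last sentence.
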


\subsection{The Partial Filtration Construction} \label{sec:3.2}

The arguments of the previous section use the $\Gamma$-invariant to reduce a question about $\aleph_1$-free groups to a purely set-theoretic problem about adding clubs to stationary sets. However, while this method proves effective, it provides very little additional insight, and one may desire forcing posets which are more algebraically motivated and closer to the original problem at hand. The following simple argument provides a first attempt at preserving the algebraic nature of the problem, translating the original poset on a stationary set back into a poset on the $\aleph_1$-filtration of the group $H$.

Let {\bf M} be a countable transitive model of {\rm ZFC}, and let $H\in {\bf M}$ be an $\aleph_1$-free group of cardinality $\aleph_1$ that is not turbid. Fix an $\aleph_1$-filtration $\mathcal F = \{H_\alpha \mid \alpha < \aleph_1\}$ of $H$ in {\bf M}, and let $E$ be defined as in Definition \ref{gammainvariant}. We have $\Gamma(H) =[E] \neq [\aleph_1]$, where
$\aleph_1 - E = \{\alpha < \aleph_1 \mid H/H_\alpha \text{ is } \aleph_1\text{-free}\}$ is stationary.

Now consider the set of all partial filtrations of $\mathcal F$ as defined in Definition \ref{filtration}. With a nod to \cite{BHK},
within this collection of partial filtrations we define $\mathcal P_{\rm pf}$ to be the set of all partial filtrations $\mathcal F_I$ of $\mathcal F$ such that for every $\alpha \in I$, $H/H_\alpha$ is $\aleph_1$-free.
We order $\mathcal P_{\rm pf}$ by ``end extension,'' that is, given two partial filtrations $\mathcal F_I$ and $\mathcal F_J$ we define $\mathcal F_J \le F_I$ if and only if $I \subseteq J$ and $\min(J-I) > \max I$.

Evidently, $(\mathcal P_{\rm pf},\le)$ is isomorphic to the Baumgartner, Harrington, Kleinberg forcing \cite{BHK} which rather uses the set $\{I \mid \mathcal F_I \in \mathcal P_{\rm pf}\}$ ordered under end extension and which adds a club $C$ to $\aleph_1-E$. Consequently, any $\mathcal P_{\rm pf}$-generic filter $G$ will add the $\aleph_1$-filtration $\mathcal F_C = \{H_\alpha \mid \alpha \in C\}$ of $H$ to ${\bf M}$. As we have $H_\beta \sqsubseteq H_\alpha$ for all $\beta\le \alpha <\aleph_1$, the filtration $\mathcal F_C$ gives evidence to a basis of $H$ in ${\bf M}[G]$, and $H$ is free.

\subsection{Forcing with Partial Bases} \label{sec:3.3}

Now let us introduce a poset which adds a basis to our non-turbid $\aleph_1$-free group $H$ directly. Since the final goal is to add a linearly independent spanning set, it is appropriate that our forcing poset should consist of ``partial bases,'' i.e., sets of linearly independent elements of $H$. In particular, we want to consider the following collection of linearly independent subsets of $H$:
\begin{equation}\label{P2}
\mathcal P_{\rm pb} = \{p \subseteq H \mid p \text{ is linearly independent}, |p| < \aleph_1, \text{ and } H/\langle p \rangle \text{ is } \aleph_1\text{-free}\}
\end{equation}
ordered under set inclusion, $q \le p$ iff $p \subseteq q$.

We start with a first easy observation that forcing with $(\mathcal P_{\rm pb},\le)$ always adds a new basis of $H$ to the base model $\mathbf{M}$.

\begin{theorem} \label{freeforce}
Let $\mathbf M$ be a countable transitive model of ${\rm ZFC}$, and let $H$ be an $\aleph_1$-free group of cardinality $\aleph_1$. Let $G$ be a $\mathcal P_{\rm pb}$-generic filter and define $\mathcal B = \bigcup G$. Then $\mathcal B$ is a basis of $H$ in $\mathbf{M}[G]$, with $\mathcal B \notin \mathbf{M}$.
\end{theorem}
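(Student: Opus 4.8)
The plan is to verify directly that $\mathcal{B} = \bigcup G$ is a basis of $H$ in $\mathbf{M}[G]$ — that is, that it is linearly independent and generates $H$, hence is a free generating set — and then to show $\mathcal{B} \notin \mathbf{M}$ by a standard genericity argument. The only properties of the ground model I would use are that $\mathbf{M}$ is countable, so that $G$ meets every dense subset of $\mathcal{P}_{\rm pb}$ lying in $\mathbf{M}$, and that $H,\mathcal{P}_{\rm pb} \in \mathbf{M}$, so that the dense sets built below belong to $\mathbf{M}$. I expect the main obstacle to be the density of the sets $D_h$ below, and in particular the verification that passing to a stronger condition keeps the relevant quotient $\aleph_1$-free.

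Linear independence of $\mathcal{B}$ is immediate: any finite subset of $\mathcal{B}$ lies in a single condition $p \in G$ (as $G$ is downward directed), and every condition is linearly independent by definition of $\mathcal{P}_{\rm pb}$. The substantive point is $\langle \mathcal{B}\rangle = H$, for which I would prove that for each $h \in H$ the set $D_h = \{p \in \mathcal{P}_{\rm pb} \mid h \in \langle p\rangle\}$ is dense; genericity then gives $p \in G$ with $h \in \langle p\rangle \subseteq \langle\mathcal{B}\rangle$. To see $D_h$ is dense, fix $p \in \mathcal{P}_{\rm pb}$ and pass to $A := H/\langle p\rangle$, which is $\aleph_1$-free, hence torsion-free, so $\langle p\rangle$ is pure in $H$. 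If the image $\bar h$ of $h$ in $A$ is $0$, take $q = p$. Otherwise let $K/\langle p\rangle$ be the pure closure of $\langle\bar h\rangle$ in $A$; it is a countable subgroup of the $\aleph_1$-free group $A$, hence free, of rank at most $1$, hence infinite cyclic, say generated by $\bar h'$ for some lift $h' \in H$. Put $q = p \cup \{h'\}$. Then $\langle q\rangle = K \ni h$; $q$ is linearly independent (a relation on $q$ reduces mod $\langle p\rangle$ to a multiple of $\bar h'$, forcing the $h'$-coefficient to vanish, and then the independence of $p$ finishes it); $|q| \le \aleph_0 < \aleph_1$; and $H/\langle q\rangle \cong A/\langle\bar h'\rangle$ is again $\aleph_1$-free. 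This last fact — a quotient of an $\aleph_1$-free group by a pure cyclic subgroup $C$ is $\aleph_1$-free — is the heart of the matter: if $\bar L$ is a countable subgroup of such a quotient, its preimage $L$ is a countable, hence free, subgroup of the $\aleph_1$-free group, $C$ is pure in $L$, a pure cyclic subgroup of a free abelian group is generated by a primitive element, and a primitive element extends to a basis, so $\bar L \cong L/C$ is free. Thus $q \in \mathcal{P}_{\rm pb}$ with $q \le p$ and $q \in D_h$, so $D_h$ is dense. Since $\mathcal{B}$ is a linearly independent generating set, it is a basis and $H$ is free in $\mathbf{M}[G]$.

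It remains to show $\mathcal{B} \notin \mathbf{M}$. Suppose toward a contradiction $\mathcal{B} \in \mathbf{M}$. Then $\mathcal{B}$ is a basis of $H$ in $\mathbf{M}[G]$, and since $\mathcal{B},H \in \mathbf{M}$ and linear independence and generation are absolute, $\mathcal{B}$ is a basis of $H$ already in $\mathbf{M}$; in particular $|\mathcal{B}| = \aleph_1^{\mathbf{M}}$. Consider $D = \{p \in \mathcal{P}_{\rm pb} \mid p \not\subseteq \mathcal{B}\} \in \mathbf{M}$. Given $p \in \mathcal{P}_{\rm pb}$ with $p \subseteq \mathcal{B}$, pick distinct $b_1,b_2 \in \mathcal{B}\setminus p$ (possible since $|p| < \aleph_1 = |\mathcal{B}|$) and set $q = p \cup \{b_1+b_2\}$: then $q$ is linearly independent, $|q| < \aleph_1$, and decomposing $H$ along the basis $\mathcal{B}$ (replacing the summand $\mathbb{Z}b_1 \oplus \mathbb{Z}b_2$ by $\mathbb{Z}(b_1+b_2) \oplus \mathbb{Z}b_1$) shows $\langle q\rangle$ is a direct summand of $H$ with free complement, so $H/\langle q\rangle$ is free and $q \in \mathcal{P}_{\rm pb}$ with $q \le p$; moreover $b_1+b_2 \notin \mathcal{B}$ by linear independence, so $q \in D$. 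Hence $D$ is dense, so $G \cap D \neq \emptyset$, yielding $p \in G$ with $p \not\subseteq \bigcup G = \mathcal{B}$, a contradiction.

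In summary, apart from the routine filter manipulations, the proof rests on one genuinely algebraic step inside the density argument for $D_h$: recognizing that the pure closure of a nonzero element in the $\aleph_1$-free quotient $H/\langle p\rangle$ is infinite cyclic, and that quotienting by it preserves $\aleph_1$-freeness. Once that is in hand, the computation $\langle q\rangle = K$, the linear independence of $q$, and the bound $|q| < \aleph_1$ are formalities, as are the absoluteness remarks and the density of $D$ in the final paragraph.
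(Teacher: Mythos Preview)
Your proof is correct and follows the same overall architecture as the paper's (linear independence via directedness of $G$; spanning via density of $D_h$; $\mathcal B\notin\mathbf M$ via density of $D=\{p\mid p\not\subseteq\mathcal B\}$), but your density argument for $D_h$ is genuinely different and more elementary. The paper proves density by passing to an auxiliary generic extension in which $|H/\langle p\rangle|$ is collapsed to $\aleph_0$, so that $H/\langle p\rangle$ becomes free there, then splits $H=\langle p\rangle\oplus C$, adjoins finitely many basis elements of $C$ to capture the given $x$, and invokes absoluteness of $\aleph_1$-freeness to pull the resulting condition back to $\mathbf M$. You instead stay entirely inside $\mathbf M$ and argue algebraically: take the pure closure of $\bar h$ in the $\aleph_1$-free quotient, observe it is infinite cyclic, adjoin a generator, and verify directly that quotienting an $\aleph_1$-free group by a pure cyclic subgroup preserves $\aleph_1$-freeness. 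Your route avoids any appeal to absoluteness or auxiliary models and isolates the exact algebraic fact doing the work; the paper's route is shorter to state and scales painlessly if one ever needed to adjoin more than one element at a time. For the final step the paper merely asserts that $D$ is dense, whereas you supply the explicit witness $q=p\cup\{b_1+b_2\}$, which is a welcome detail.
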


\begin{proof}
It is evident that $\mathcal B$ is a linearly independent set since every finite subset of $B$ must be contained in some $p \in \mathcal P_{\rm pb}$. To show that $\mathcal B$ is a spanning set for $H$, suppose that $x \in H$, and define
\[D_x = \{p \in \mathcal P_{\rm pb} \mid x \in \langle p \rangle\}.\]

\begin{claim}
$D_x$ is dense in $\mathcal P_{\rm pb}$.
\end{claim}

Say that $q \in \mathcal P_{\rm pb}$. By a cardinal collapse, there is some generic extension $\mathbf M \subseteq \mathbf{N}$ such that $H/\langle q \rangle$ is countable in $\mathbf{N}$. Because $\aleph_1$-freeness is an absolute property \cite{HP}, $H/\langle q \rangle$ is a countable $\aleph_1$-free group in $\mathbf N$, thus free. Thus $H = \langle q \rangle \oplus C$, where $C$ is a free group with basis $\mathcal B_C$. Write $x = x_q + x_C$ according to this decomposition. Let $B'$ be a finite subset of $\mathcal B_C$ such that $x_C \in \langle B' \rangle$, and define $p = q \cup B'$. Then clearly $H/\langle p \rangle$ is free (thus $\aleph_1$-free) in $\mathbf N$. Absoluteness gives that $H/\langle p \rangle$ is also $\aleph_1$-free in $\mathbf M$, and $p \in \mathcal P_{\rm pb}$. Clearly $p \in D_x$ and $p \leq q$.\medskip

As $D_x$ is dense in $\mathcal P_{\rm pb}$, it follows that $x \in \langle \mathcal B \rangle$, and $\mathcal B$ is a basis for $H$ in $\mathbf M[G]$. In addition $\mathcal B\notin \mathbf{M}$. This is immediate if $H$ is not free in $\mathbf{M}$. However, note that for $\mathcal B\in \mathbf{M}$, the set
\[ D = \{p \in \mathcal P_{\rm pb} \mid p \not\subseteq \mathcal B\}\]
would be dense in $\mathcal P_{\rm pb}$, leading to a contradiction. Thus, $\mathcal B\notin \mathbf{M}$ even for free~$H$.
\end{proof}

The next question is whether $(\mathcal P_{\rm pb},\le)$ preserves $\aleph_1$. In Lemma \ref{mtheorema}, we have already seen that $\aleph_1$ must collapse whenever $H$ is turbid. What happens when $H$ is non-turbid? While not conclusive evidence, it turns out that $(\mathcal P_{\rm pb},\le)$ always fails the closure condition for cardinal preservation.

\begin{proposition}\label{omegaclosed}
If $H$ is an $\aleph_1$-free group of cardinality $\aleph_1$, then $(\mathcal P_{\rm pb},\le)$ fails to be $\aleph_1$-closed.
\end{proposition}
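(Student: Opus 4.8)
The plan is to exhibit a descending $\omega$-sequence $p_0 \ge p_1 \ge p_2 \ge \cdots$ in $(\mathcal P_{\rm pb},\le)$ that has no lower bound; since $\omega < \aleph_1$ this already shows that $(\mathcal P_{\rm pb},\le)$ is not $\aleph_1$-closed. The sequence will live inside a fixed countable pure free subgroup of $H$ of infinite rank, and is essentially the classical realization of $\mathbb Z[1/2]$ as a non-free quotient of a free group. First I would fix such a subgroup: since $H$ is $\aleph_1$-free it is torsion-free, and it has infinite rank (a torsion-free group of finite rank embeds into some $\mathbb Q^n$ and is countable, whereas $|H| = \aleph_1$). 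Choose a countably infinite linearly independent subset of $H$ and let $F$ be the purification in $H$ of the subgroup it generates. Then $F$ is pure in $H$; being torsion-free of rank $\aleph_0$ it is countable, and being a countable subgroup of the $\aleph_1$-free group $H$ it is free, hence free of rank $\aleph_0$. Fix a basis $\{e_i \mid i < \omega\}$ of $F$, put $y_i := e_i - 2e_{i+1}$ for $i < \omega$, and set $p_n := \{y_0, \dots, y_{n-1}\}$ for $n < \omega$, so that $p_0 \subseteq p_1 \subseteq \cdots$ and therefore $p_0 \ge p_1 \ge \cdots$ in the poset order (once membership in $\mathcal P_{\rm pb}$ is checked).

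Next I would verify $p_n \in \mathcal P_{\rm pb}$ for each $n$. The set $p_n$ is finite and linearly independent. Since $\{y_0, \dots, y_{n-1}, e_n, e_{n+1}, \dots\}$ is again a basis of $F$ (one recovers $e_i = y_i + 2e_{i+1}$ for $i<n$ from it, a unitriangular change of basis), the subgroup $\langle p_n\rangle$ is generated by part of a basis of $F$, hence is a direct summand --- in particular a pure subgroup --- of $F$; as $F$ is pure in the torsion-free group $H$, $\langle p_n\rangle$ is then a finitely generated pure subgroup of $H$. The key auxiliary fact is: if $A$ is a finitely generated pure subgroup of an $\aleph_1$-free group $G$, then $G/A$ is $\aleph_1$-free --- indeed a countable subgroup of $G/A$ has the form $K/A$ with $K \le G$ countable, hence $K$ is free, so $A$ is a finitely generated pure subgroup of the free abelian group $K$, hence a direct summand, and $K/A$ is free. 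Applying this gives that $H/\langle p_n\rangle$ is $\aleph_1$-free, so $p_n \in \mathcal P_{\rm pb}$. Writing $p_\omega := \bigcup_{n<\omega} p_n = \{y_i \mid i < \omega\}$, the same computation inside $F$ shows $F/\langle p_\omega\rangle \cong \mathbb Z[1/2]$: the images $\bar e_i$ satisfy $\bar e_i = 2\bar e_{i+1}$ and each has infinite order.

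The crux is the absence of a lower bound. Suppose $p \in \mathcal P_{\rm pb}$ with $p \le p_n$ for all $n$; then $p \supseteq p_\omega$ and $H/\langle p\rangle$ is $\aleph_1$-free. Since $\langle p_\omega\rangle \subseteq F \cap \langle p\rangle \subseteq F$, the group $F/(F\cap\langle p\rangle)$ is a quotient of $F/\langle p_\omega\rangle \cong \mathbb Z[1/2]$; on the other hand $F/(F\cap\langle p\rangle) \cong (F + \langle p\rangle)/\langle p\rangle$ is a countable subgroup of the $\aleph_1$-free group $H/\langle p\rangle$, hence free. But a free abelian quotient of $\mathbb Z[1/2]$ is projective, hence splits off as a direct summand, and since $\mathbb Z[1/2]$ is indecomposable and not free the only such quotient is $0$; hence $F \subseteq \langle p\rangle$. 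Now $p_\omega$ is a subset of the basis $p$ of the free abelian group $\langle p\rangle$, so $\langle p\rangle = \langle p_\omega\rangle \oplus D$ with $D := \langle p \setminus p_\omega\rangle$; since $\langle p_\omega\rangle \subseteq F \subseteq \langle p\rangle$, Dedekind's modular law gives $F = \langle p_\omega\rangle \oplus (F \cap D)$, so $F/\langle p_\omega\rangle \cong F \cap D$ is a subgroup of the free abelian group $\langle p\rangle$ and therefore free --- contradicting $F/\langle p_\omega\rangle \cong \mathbb Z[1/2]$. Thus $p_0 \ge p_1 \ge \cdots$ has no lower bound.

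I expect the real work to lie in the two points where $\aleph_1$-freeness, rather than mere torsion-freeness, genuinely enters: checking that $H/\langle p_n\rangle$ is $\aleph_1$-free (which is why $F$ must be chosen pure in $H$, so that finitely generated summands of $F$ remain pure in $H$, and why the lemma on finitely generated pure subgroups is needed), and the ``absorption'' step $F \subseteq \langle p\rangle$, which rests on the rigidity of $\mathbb Z[1/2]$ --- it has no nonzero free quotient. Given these, the closing modular-law contradiction is routine, and the argument makes no distinction between turbid and non-turbid $H$.
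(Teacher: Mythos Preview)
Your proof is correct and shares the paper's core idea---build a telescoping sequence inside a countable free pure subgroup so that the limit generates a subgroup with non-$\aleph_1$-free quotient---but the organization differs in two places worth noting. First, the paper constructs a chain of finite-rank purifications $H_i$ with basis $q_j$, then splits into cases according to whether $q=\bigcup q_j$ lies in $\mathcal P_{\rm pb}$; only in the second case does it pass to the telescoped sequence $S_j$, and it verifies $S_j\in\mathcal P_{\rm pb}$ via transitivity through $\langle q\rangle$. Your single-step argument---$\langle p_n\rangle$ is finitely generated pure in $H$, hence $H/\langle p_n\rangle$ is $\aleph_1$-free---is cleaner and eliminates the case split entirely. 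Second, for the absence of a lower bound the paper uses a one-line transitivity argument: if $p\supseteq p_\omega$ with $H/\langle p\rangle$ $\aleph_1$-free, then $\langle p\rangle/\langle p_\omega\rangle$ is free (as $p_\omega$ is part of the basis $p$), so $H/\langle p_\omega\rangle$ is $\aleph_1$-free, contradicting the divisible element $e_0+\langle p_\omega\rangle$. Your route through $F\subseteq\langle p\rangle$ and the modular law is correct but more circuitous; the paper's transitivity shortcut would let you drop the entire final paragraph. (The choice of $\mathbb Z[1/2]$ versus the paper's $\mathbb Q$ is immaterial.)
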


Before we prove Proposition \ref{omegaclosed}, we start investigating a simplified situation.

\begin{lemma}
Let $H = \bigoplus_{n \in \omega} \mathbb Z e_n$, then $(\mathcal P_{\rm pb},\le)$ is not $\aleph_1$-closed.
\end{lemma}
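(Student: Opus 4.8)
The plan is to produce an explicit descending $\omega$-chain $p_0\ge p_1\ge p_2\ge\cdots$ in $(\mathcal P_{\rm pb},\le)$ that admits no lower bound. Note first that, $H$ being countable, every subgroup of $H$ has a countable quotient, so for $p\subseteq H$ the requirement ``$H/\langle p\rangle$ is $\aleph_1$-free'' is just ``$H/\langle p\rangle$ is free''; thus $\mathcal P_{\rm pb}$ is the set of linearly independent $p\subseteq H$ with $H/\langle p\rangle$ free. I would set $b_n:=e_n-2e_{n+1}$ for $n\in\omega$ and $p_n:=\{b_0,\dots,b_{n-1}\}$ (so $p_0=\emptyset$). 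Reading off coordinates shows $\{b_n\mid n\in\omega\}$ is linearly independent, and $\bigcup_{n\in\omega}p_n=\{b_n\mid n\in\omega\}$, so any lower bound of the chain must contain every $b_n$. The construction exploits the classical phenomenon that an increasing union of pure free subgroups with free quotients need not have a free quotient: $\langle b_n\mid n\in\omega\rangle$ is engineered so that the image of $e_0$ in $H/\langle b_n\mid n\in\omega\rangle$ is nonzero yet divisible by every power of $2$.

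Step one is to verify $p_n\in\mathcal P_{\rm pb}$. One checks that $H=\langle b_0,\dots,b_{n-1}\rangle\oplus\bigoplus_{k\ge n}\mathbb Z e_k$: directness follows by successively reading the $e_0,\dots,e_{n-1}$ coordinates of a putative relation $\sum_{j<n}c_jb_j\in\bigoplus_{k\ge n}\mathbb Z e_k$, which forces $c_0=\cdots=c_{n-1}=0$; spanning follows from the telescoping identity $\sum_{i=j}^{n-1}2^{\,i-j}b_i=e_j-2^{\,n-j}e_n$ for $j<n$. Hence $H/\langle p_n\rangle\cong\bigoplus_{k\ge n}\mathbb Z e_k$ is free, so $p_n\in\mathcal P_{\rm pb}$, and $p_0\ge p_1\ge p_2\ge\cdots$.

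Step two is to show the chain has no lower bound. Suppose $q\in\mathcal P_{\rm pb}$ with $q\le p_n$ for all $n$; then $\{b_n\mid n\in\omega\}\subseteq q$, and $H/\langle q\rangle$ is free. For each $m\ge1$ the telescoping identity gives $e_0-2^me_m=\sum_{j<m}2^jb_j\in\langle q\rangle$, so the image $\bar e_0$ of $e_0$ in $H/\langle q\rangle$ lies in $2^m(H/\langle q\rangle)$ for all $m$; since $\bigcap_m2^mG=0$ in any free abelian group $G$, we get $\bar e_0=0$, i.e.\ $e_0\in\langle q\rangle$. Then $2e_1=e_0-b_0\in\langle q\rangle$, and torsion-freeness of $H/\langle q\rangle$ forces $e_1\in\langle q\rangle$; inductively $e_n\in\langle q\rangle$ for every $n$, so $\langle q\rangle=H$ and $q$ is a basis of $H$ containing $\{b_n\mid n\in\omega\}$. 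Writing $q=\{b_n\mid n\in\omega\}\cup S$ with $S$ disjoint from the $b_n$ then gives $H/\langle b_n\mid n\in\omega\rangle\cong\langle S\rangle$ free. But this contradicts the design of the $b_n$: the homomorphism $H\to\mathbb Z[1/2]$ with $e_n\mapsto 2^{-n}$ kills every $b_n$ but not $e_0$, so $\bar e_0\ne0$ in $H/\langle b_n\mid n\in\omega\rangle$, while $\bar e_0=2^m\bar e_m$ there for all $m$ — hence that group is not free. So no lower bound exists and $(\mathcal P_{\rm pb},\le)$ is not $\aleph_1$-closed.

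I expect the main obstacle to be pinning down the two facts that carry the argument: that each finite stage $H/\langle p_n\rangle$ is free (a small but genuine direct-sum computation) and, more conceptually, that no linearly independent $q\supseteq\{b_n\mid n\in\omega\}$ can have $H/\langle q\rangle$ free — the latter is precisely where the choice $b_n=e_n-2e_{n+1}$ is doing the work, smuggling an infinitely $2$-divisible element into every such quotient and thereby obstructing freeness.
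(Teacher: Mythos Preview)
Your proof is correct and follows essentially the same approach as the paper: build a descending $\omega$-chain from telescoping elements $e_n - c_n e_{n+1}$ so that the limit subgroup leaves a nonzero divisible element in the quotient, obstructing any lower bound. The only cosmetic differences are your choice $c_n=2$ versus the paper's $c_n=n+1$, and that for the final contradiction the paper invokes transitivity of $\aleph_1$-freeness directly (if $q\le S$ then $H/\langle q\rangle$ $\aleph_1$-free and $\langle q\rangle/\langle S\rangle$ free force $H/\langle S\rangle$ $\aleph_1$-free), whereas you take the extra step of showing any such $q$ would have to be a full basis of $H$---both routes arrive at the same impossibility.
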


\begin{proof}
We define a sequence of elements \[s_i = e_i - (i+1) e_{i+1} \in H\] and sets $S_j = \{s_i \mid i < j\}\subseteq H$ for any $j \in \omega =\{0,1,2,\ldots\}$. A straightforward calculation shows that $S_j$ is linearly independent for each $j \in\omega$. We define a sequence of sets such that $\mathcal B_0 = \{e_0, e_1, \ldots\}$ and for all $j>0$, $\mathcal B_j = \{s_0, s_1, \ldots ,s_{j-1}, e_{j}, e_{j+1}, \ldots\}$. Clearly, each of these sets is a basis for $H$ since it is linearly independent and spanning.
Thus $\langle S_j \rangle$ is a pure subgroup of $H$ for all $j$ since
\[H = \langle S_j \rangle \oplus \bigoplus_{i\ge j} \mathbb Z e_i,\] and summands are pure in free groups. Moreover, $H/\langle S_j \rangle$ is free for all $j$. Hence $S_j \in \mathcal P_{\rm pb}$ for all $j \in \omega$.

We next define $S = \{s_i \mid i \in \omega\}$. Note that the sets $S_j \subseteq S$ and the $S_j$ form a countable descending sequence in $\mathcal P_{\rm pb}$ with limit $S$. Furthermore $e_0 \notin \genby S$, so that $\langle S \rangle$ is a proper subgroup of $H$ of corank 1. To see this, define a map $\phi: H \rightarrow \mathbb Q$  which maps $e_i \mapsto \frac{1}{i!}$, and extend it linearly to make a homomorphism. Then $\phi(s_i) = \phi(e_i) - (i+1) \phi(e_{i+1}) = 0$, and we have $\ker \phi = \genby S$. However $\phi(e_0) = 1$ so that $e_0 \notin \ker \phi$. Thus, $e_0 + \genby S$ is a nonzero element of $H/\genby S$.

To show that $S \notin \mathcal P_{\rm pb}$, it suffices to verify that $H/\genby S$ is torsionfree but $e_0 + \genby S$ is divisible. Thus, suppose that $n(h + \genby S) = 0$. Then this implies that there exist $n_i\in \mathbb{Z}, s_i \in S$ such that $nh = \sum_{\, i} n_i s_i$. There must exist some $k\in\omega$ such that $s_i \in S_k$ for all $i$ in this expression. As $\genby{S_k}$ is pure in $H$, $h \in \genby{S_k} \subseteq \genby S$. Hence $h + \genby S = \genby S$ in $H/\genby S$. On the other hand, our telescopic construction of the $s_i$ implies that
\[e_0 \equiv i!\, e_i \mod \genby S\] for all $i \in \omega$. Thus $e_0+ \genby S$ is divisible in $H/\genby S$, and the torsionfree group $H/\genby S$ contains a copy of $\mathbb Q$. So $S \notin \mathcal P_{\rm pb}$.

All that is left to check is that there is no element $p \in \mathcal P_{\rm pb}$ such that $p \leq S$. If there did exist such an element, $H/\langle p \rangle$ would be $\aleph_1$-free. As $S$ is a subset of $p$, $\langle p \rangle/\langle S \rangle$ is a free group. Hence $H/\langle S \rangle$ is $\aleph_1$-free by the transitivity of $\aleph_1$-freeness. But this would imply that $S \in \mathcal P_{\rm pb}$, a final contradiction. Hence $(\mathcal P_{\rm pb},\le)$ is not $\aleph_1$-closed.
\end{proof}

This basic idea can be implemented into a proof of Proposition \ref{omegaclosed}.

\begin{proof}
Suppose that $H$ is an $\aleph_1$-free group of cardinality $\aleph_1$. In a first step we are going to construct a sequence $\{h_i \mid i\in \omega \}$ of nonzero elements of $H$. Start with an arbitrary $0\ne h_0 \in H$. Given $\{ h_j \mid j\le i\}$, let $H_i =\langle h_j \mid j\le i\rangle_*$ denote the purification in $H$, and choose $h_{i+1} \in H-H_i$ with respect to a fixed well-ordering of $H$. Note that $\{h_i \mid i\in \omega \}$ is linearly independent. Hence, $H_i$ is a group of rank~$i$. Moreover, with Pontryagin's criterion we can show that $H/H_i$ is $\aleph_1$-free.

As $H$ is $\aleph_1$-free, $H_i$ must be free of rank $i$. Moreover, $H_i \sqsubseteq H_{i+1}$.
Thus, we can define a sequence $q=\{e_i \mid i\in \omega \}$ of nonzero elements of $H$ such that
$e_0=h_0$ and $H_{i+1} =H_i \oplus \mathbb Z e_{i+1}$. In particular, for $q_j=\{e_i \mid i\le j \}$ holds $H_j =\genby{q_j}$, and $\{q_j \mid j \in \omega\}$ is a strictly descending sequence in $(\mathcal P_{\rm pb},\le)$.

If $q=\bigcup_{j\in \omega} q_j \notin \mathcal P_{\rm pb}$, then the final paragraph of the previous proof shows that there does not exists any $p \in \mathcal P_{\rm pb}$ such that $p \le q$.

All that remains to be checked is the case where $q=\bigcup_{j\in \omega} q_j \in \mathcal P_{\rm pb}$. For this,
we will consider $\langle q \rangle = \bigoplus_{i \in \omega} \mathbb Z e_i \subseteq H$. Let
\[s_i = e_i - (i+1)e_{i+1} \in \langle q \rangle\]
for all $i \in \omega$, and define the sets $S_j$ and $S$ as in the previous proof. Then, with $H/\genby q$ $\aleph_1$-free and $\genby q/\genby{S_j}$ free, $H/\genby {S_j}$ is $\aleph_1$-free by transitivity, while $\genby q/ \genby S \subseteq H/\genby S$ contains a nontrivial divisible subgroup. Hence $S_j \in \mathcal P_{\rm pb}$ and $S = \bigcup_{j\in \omega} S_j \notin \mathcal P_{\rm pb}$, and $(\mathcal P_{\rm pb},\le)$ fails to be $\aleph_1$-closed as in the previous lemma.
\end{proof}

Cardinal preservation requires a more in-depth argument which is inspired by~\cite{BHK}.

\begin{theorem}\label{MainTheorem}
Let $\mathbf M$ be a countable transitive model of {\rm ZFC} with
\[ \mathbf M \vDash \text{``$H$ is a non-turbid $\aleph_1$-free group of cardinality $\aleph_1$,''}\]
and let $G$ be any $\mathcal P_{\rm pb}$-generic filter. Then $\mathbf{M}[G]$ preserves $\aleph_1$.
\end{theorem}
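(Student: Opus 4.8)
The plan is to show that $(\mathcal P_{\rm pb}, \le)$ is proper (or at least $\aleph_1$-preserving) by an argument modeled on the Baumgartner–Harrington–Kleinberg proof that forcing a club into a stationary set preserves $\aleph_1$. The key point is that non-turbidity means $\aleph_1 - E$ is stationary, where $E$ is the set of ``bad'' points in a fixed $\aleph_1$-filtration $\mathcal F = \{H_\alpha \mid \alpha < \aleph_1\}$ of $H$; the stationarity of $\aleph_1 - E$ is exactly what will rescue us from the failure of $\aleph_1$-closure observed in Proposition~\ref{omegaclosed}. Concretely, suppose toward a contradiction that $\dot f$ is a $\mathcal P_{\rm pb}$-name and some $p_0$ forces $\dot f : \omega \to \aleph_1$ to be surjective. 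I would run the standard reflection/elementary-submodel argument: build a continuous increasing chain $\{N_\alpha \mid \alpha < \aleph_1\}$ of countable elementary submodels of some large $H(\theta)$, each containing $H$, $\mathcal F$, $\mathcal P_{\rm pb}$, $p_0$, $\dot f$, and closed enough that $\delta_\alpha := N_\alpha \cap \aleph_1$ is an ordinal with $H \cap N_\alpha = H_{\delta_\alpha}$. The set $\{\delta_\alpha \mid \alpha < \aleph_1\}$ is a club, so it meets $\aleph_1 - E$; fix $\alpha$ with $\delta := \delta_\alpha \in \aleph_1 - E$, so $H/H_\delta$ is $\aleph_1$-free.

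The heart of the matter is then: given any condition $q \in \mathcal P_{\rm pb} \cap N_\delta$ extending $p_0$, I want to build an $(N_\delta, \mathcal P_{\rm pb})$-generic condition below $q$ — equivalently, a descending $\omega$-sequence $q = q_0 \ge q_1 \ge q_2 \ge \cdots$ through the dense sets of $N_\delta$ whose union $q_\omega = \bigcup_n q_n$ is still in $\mathcal P_{\rm pb}$. Genericity of the sequence will force $\dot f$ to be ``decided below $\delta$'' in the sense that $q_\omega \Vdash \operatorname{ran}(\dot f) \subseteq \delta < \aleph_1$, contradicting surjectivity. So everything reduces to the combinatorial task of showing the union of a suitably chosen descending $\omega$-chain of partial bases stays a partial base. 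Here I would use the non-turbidity of the relevant quotient: since $H/H_\delta$ is $\aleph_1$-free (in particular its countable subgroups are free), and each $q_n \in N_\delta$ has $\langle q_n\rangle$ a countable subgroup with $H/\langle q_n\rangle$ $\aleph_1$-free, I can choose the $q_{n+1}$ not just to meet the $n$-th dense set of $N_\delta$ but also to ``fill up'' $H_\delta$ — i.e., arrange that $\langle q_\omega \rangle \supseteq H_\delta$, or more precisely that $\langle q_\omega\rangle$ is a pure subgroup with $H_\delta \subseteq \langle q_\omega\rangle$ and $\langle q_\omega\rangle / H_\delta$ free. Then $H/\langle q_\omega\rangle$ is a quotient of $H/H_\delta$ by a free subgroup, hence $\aleph_1$-free by transitivity of $\aleph_1$-freeness, so $q_\omega \in \mathcal P_{\rm pb}$. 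The failure mode in Proposition~\ref{omegaclosed} — where $\langle S\rangle$ picked up a divisible quotient — is avoided precisely because we are forcing the union to exhaust an initial segment $H_\delta$ of the filtration and $H/H_\delta$ is genuinely $\aleph_1$-free.

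The main obstacle I anticipate is the bookkeeping in the fusion step: I must interleave three demands on the sequence $(q_n)$ — (i) meeting the $n$-th dense set of $N_\delta$ (to get genericity and hence the decision of $\dot f$), (ii) making $\langle q_\omega\rangle$ a \emph{pure} subgroup of $H$ (so that torsion does not sneak into $H/\langle q_\omega\rangle$), and (iii) ensuring $H_\delta \subseteq \langle q_\omega\rangle$ with free quotient. Demands (ii) and (iii) are where the geometry of $H$ enters: I would enumerate $H_\delta = \{x_k \mid k \in \omega\}$ and at stage $n$, after extending to meet a dense set, further extend $q_{n+1}$ by finitely many elements to guarantee $x_n \in \langle q_{n+1}\rangle$ and to close off purity obligations generated so far (using that $H/\langle q_{n+1}\rangle$ is $\aleph_1$-free, so finitely generated pure-closure computations stay inside countable free subgroups). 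One must check that each such finite extension still lies in $\mathcal P_{\rm pb} \cap N_\delta$; this follows from the density argument in the proof of Theorem~\ref{freeforce} (the sets $D_x$ are dense) together with elementarity of $N_\delta$, and from the observation that adjoining finitely many elements to realize purity keeps the quotient $\aleph_1$-free. Once the sequence is built and $q_\omega$ is verified to be a condition, standard properness machinery yields that $q_\omega$ forces $\dot f$ bounded, giving the contradiction and establishing that $\aleph_1$ is preserved.
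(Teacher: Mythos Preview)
Your proposal is correct and follows essentially the same BHK-inspired strategy as the paper: use stationarity of $\aleph_1 - E$ to locate a good level $\delta$, thread a descending $\omega$-sequence of conditions that decides the putative collapse function, and verify that the union is still a condition because it ends up being a basis of $H_\delta$ with $\delta \in \aleph_1 - E$.

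The presentation differs in packaging. The paper works directly, without elementary submodels: it isolates the dense subset
\[
\mathcal P' = \{\, p \in \mathcal P_{\rm pb} \mid p \text{ is a basis of } H_\alpha \text{ for some } \alpha \in \aleph_1 - E \,\},
\]
and builds by hand an $\aleph_1$-tree of approximations $(q,n,g)$ recording partial decisions of the name together with heights $h(q)$; the club of accumulated heights is then intersected with $\aleph_1 - E$. Your elementary-submodel framing is the modern equivalent. One payoff of the paper's explicit use of $\mathcal P'$ is that it dissolves your bookkeeping worries (ii) and (iii): once every $q_n$ is chosen from $\mathcal P'$ with heights cofinal in $\delta$, the union $q_\omega$ is \emph{exactly} a basis of $H_\delta$, so no separate purity maintenance is needed and the quotient $\langle q_\omega\rangle / H_\delta$ is trivially zero (not merely free). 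Indeed, since all your $q_n$ lie in $N_\delta$ you automatically have $\langle q_\omega\rangle \subseteq H_\delta$, and your fill-up step gives equality --- so your ``more precisely'' clause is slightly overcomplicated but harmless.
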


\begin{proof}
Fix some $\aleph_1$-filtration $\{H_\alpha \mid \alpha < \aleph_1\}$ of $H$. Without loss of generality, we may assume that $H_\alpha \subsetneq H_\beta$ for all $\alpha < \beta$. Let $E := \{\alpha < \aleph_1 \mid H/ H_\alpha \text{ is not } \aleph_1\text{-free}\}$. Since $H$ is not turbid, $\aleph_1 - E = \{\alpha < \aleph_1 \mid H/H_\alpha \text{ is } \aleph_1 \text{-free}\}$ is stationary. Then define
\[\mathcal P' := \{p \mid p \text{ is a basis of } H_\alpha \mbox{ for some } \alpha \in \aleph_1 - E\} \subseteq \mathcal P_{\rm pb}.\]

\begin{claim}
$\mathcal P'$ is dense in $\mathcal P_{\rm pb}$.
\end{claim}

To see this, let $p \in \mathcal P_{\rm pb}$. As $p$ is countable, there exists some $\alpha$ such that $p \subseteq H_\alpha$. Since $\aleph_1 - E$ is stationary (hence unbounded), without loss of generality, assume that $\alpha \in \aleph_1 - E$. Now, by assumption $H/ \langle p \rangle$ is $\aleph_1$-free. Thus, as $H_\alpha / \langle p \rangle$ is a countable subgroup of $H/ \langle p \rangle$, it follows that $H_\alpha/\langle p \rangle$ is free, and $p$ can be extended to a basis $q$ of $H_\alpha$. We have $q \in \mathcal P'$ and $q \leq p$, so density follows. \medskip

Let $p \in \mathcal P'$. Since our $\aleph_1$-filtration is strictly increasing, there exists some unique ordinal $h(p)<\aleph_1$ such that $\langle p \rangle = H_{h(p)}$, which will be called the \emph{height} $h(p)$ of $p$.

Towards a contradiction, suppose that $\mathbf{M}[G]$ contains a bijection $f: \omega \rightarrow \omega_1^{\mathbf M}$ which is not contained in $\mathbf M$. Then let $\tau$ be  a $\mathcal P_{\rm pb}$-name such that $\tau$ evaluates to $f$ on $G$. Hence there is some $p \in \mathcal P_{\rm pb}$ (hence some $p \in \mathcal P'$) such that
\begin{equation} \label{eq1}
p \Vdash \text{``}\tau \text{ is a bijection between } \widecheck \omega \text{ and } \widecheck{\omega_1^{\mathbf M}} \text{ with } \tau \notin \widecheck{(\leftidx{^{\omega}}\omega_1)^{\mathbf M}}.\text{''}
\end{equation}

In order to approximate this new bijection $f$ within $\mathbf M$, we define an ascending sequence $\{A_\alpha \mid \alpha <\aleph_1\}$ of subsets $A_\alpha \subseteq \mathcal P' \times \omega \times \leftidx{^{<\omega}}\omega_1$ in $\mathbf M$ such that when $(q,n,g) \in A_\alpha$ then $\dom(g) = n$ and
\begin{equation} \label{eq2}
\text{$q \Vdash$ ``$\tau$ is a bijection  between $\widecheck \omega$ and $\widecheck{\omega_1^{\mathbf M}}$ with $\tau \notin \widecheck{(\leftidx{^{\omega}}\omega_1)^{\mathbf M}}$ and $\tau \upharpoonright \widecheck n = \widecheck g$.''}
 \end{equation}
In other words, $A_\alpha$ collects some partial bijections $g$ which approximate our new bijection $f$, and elements of $\mathcal P'$ which force these approximations. To properly scaffold this construction, for each $A_\alpha$ we introduce the ordinal
\[h_\alpha = \sup\{h(q) \mid (q,n,g) \in A_\alpha\}.\]
We now inductively construct $\{A_\alpha \mid \alpha <\aleph_1\}$ as follows:\smallskip

First define $A_0 = \{(p, 0, \emptyset)\}$ where $p$ is the element of $\mathcal P'$ defined by \eqref{eq1}. For the successor step, let $A_\alpha$ be defined, assume $h_\alpha < \aleph_1$, and let $(q,n,g) \in A_\alpha$. Choose a $\mathcal P_{\rm pb}$-generic filter $G'$ with $q \in G'$. As $(q,n,g)$ satisfies \eqref{eq2}, within $\mathbf M[G']$, $\tau$~evaluates to a bijection $\tau_{G'}$ between $\omega$ and ${\omega_1^\mathbf{M}}$ with $\tau_{G'} \upharpoonright n = g$. Define a function $g'\in \leftidx{^{n+1}}\omega_1$ by $g' = \tau_{G'} \upharpoonright (n+1)= g \cup \{ (n, \tau_{G'}(n)\}$. By the Fundamental Theorem of Forcing there exists some $q' \in G'$ such that
\[\text{$q' \Vdash$ ``$\tau$ is a bijection between $\widecheck \omega$ and $\widecheck{\omega_1^{\mathbf M}}$ with $\tau \notin \widecheck{(\leftidx{^{\omega}}\omega_1)^{\mathbf M}}$ and $\tau \upharpoonright \widecheck{n+1} = \widecheck{g'}$.''}\]
Because $q$ and $q'$ are contained in the filter $G'$, we can choose a common extension $q'' \in G'$ (hence some $q'' \in \mathcal P'$ with $h(q'') > h_\alpha$) of $q$ and $q'$. Then $(q'',n+1,g')$ satisfies
\begin{equation} \label{eq3}
\text{$q'' < q$, $h(q'') > h_\alpha\ge h(q)$, $g \subseteq g'$,}
 \end{equation}
 and
 \begin{equation} \label{eq4}
\text{$q''\! \Vdash$\!\! ``$\tau$ is a bijection between $\widecheck \omega$ and $\widecheck{\omega_1^{\mathbf M}}$ with $\tau \notin \widecheck{(\leftidx{^{\omega}}\omega_1)^{\mathbf M}}$ and $\tau\! \upharpoonright \widecheck{n\!+\!1} = \widecheck{g'}$.''}
 \end{equation}
Now that the existence of such a $(q'',n+1,g')$ is settled, we may rather choose $(q'',n+1,g')\in \mathcal P' \times \omega \times \leftidx{^{<\omega}}\omega_1$ with properties \eqref{eq3} and \eqref{eq4} to be minimal with respect to some fixed well-ordering of $\mathcal P' \times \omega \times \leftidx{^{< \omega}}\omega_1$ in {\bf M}. This process is well-defined and decidable within {\bf M}, since $\Vdash$ may be replaced by $\Vdash^\ast$. Then define
\[A_{\alpha+1} = A_\alpha \cup \{(q'', n+1, g') \mid (q,n,g) \in A_\alpha\},\]
completing the successor step. Finally, for $\alpha$ a limit ordinal define $A_\alpha = \bigcup_{\beta < \alpha} A_\beta$.\smallskip

From the construction, it is clear that $|A_0|=1$ and $|A_{\alpha+1}-A_\alpha|\le |A_\alpha|$. An easy transfinite induction shows $A_\beta \subseteq A_\alpha$ for $\beta \le \alpha < \aleph_1$. Furthermore, $A_\alpha$ is countable for all $\alpha < \aleph_1$. Thus, $\{h(q) \mid (q,n,g) \in A_\alpha\} \subseteq \aleph_1$ is countable, justifying the initial assumption $h_\alpha < \aleph_1$ of our successor step construction.

Finally, consider the set of heights $C = \{h_\alpha \mid \alpha < \aleph_1\}$. From the construction, it is clear that $h_\beta < h_\alpha <\aleph_1$ for $\beta < \alpha < \aleph_1$, and $C\subseteq \aleph_1$ is a club. Define the club
\[C^* =\{h_\alpha \mid \alpha<\aleph_1 \text{ is a limit ordinal}\}\subseteq \aleph_1.\]

Since $\aleph_1 - E$ is a stationary set, we have $(\aleph_1 - E) \cap C^*\ne \emptyset$. Choose some $h_{\alpha^*} \in (\aleph_1 - E) \cap C^*$. Since $\alpha^*$ is a countable limit ordinal, there is a strictly increasing sequence of ordinals $(\alpha_n)_{n \in \omega}$ with supremum $\alpha^*$. Thus
\[h_{\alpha^*} = \sup_{\beta < \alpha^*} h_\beta = \sup_{n \in \omega} h_{\alpha_n}.\]

From $\{A_\alpha \mid \alpha <\aleph_1\}$ and $(\alpha_n)_{n \in \omega}$, we now inductively define for each $n\in \omega$ some $(q_n, n, g_n)\in \mathcal P' \times \omega \times \leftidx{^{<\omega}}\omega_1$. First, set $(q_0, 0, g_0) = (p, 0, \emptyset) \in A_0 \subseteq A_{\alpha_0}$. Then, assuming that $(q_n, n, g_n) \in A_{\alpha_n}$ is given, let
 \begin{equation} \label{eq5}
(q_{n+1}, n+1, g_{n+1}) = (q''_{n}, n+1, g'_n) \in A_{\alpha_n +1} - A_{\alpha_n} \subseteq A_{\alpha_{n+1}}
 \end{equation}
as detailed in the previous construction. With \eqref{eq3}, we have $q_{n+1} = q_n'' < q_n$ and $g_n \subseteq g_n' = g_{n+1}$. Then $\{q_n \mid n \in \omega\}$ is a strictly descending sequence in $(\mathcal P_{\rm pb},\le)$ and $g = \bigcup_{n \in \omega} g_n$ defines a function $g: \omega \rightarrow \omega_1$ in $\mathbf M$. Finally, with \eqref{eq3} and \eqref{eq5}, we have $h_{\alpha_n} < h(q_n'') = h(q_{n+1}) \leq h_{\alpha_{n+1}}$. Thus $\sup_{n \in \omega}h(q_n) = \sup_{n \in \omega} h_{\alpha_n} = h_{\alpha^*}$.

Now note that $q_n \in \mathcal P'$ means that $q_n$ is a basis for the subgroup $H_{h(q_n)} \subseteq H$ in our $\aleph_1$-filtration. Then $q^* := \bigcup_{n \in \omega} q_n$ is a basis for $H_{\sup_{n \in \omega} h(q_n)} = H_{h_{\alpha^*}}$. Since $h_{\alpha^*} \in \aleph_1 - E$, we have $q^* \in \mathcal P'$ with $q^* \leq q_n$ for all $n\in \omega$. With \eqref{eq4}, we have
\[\text{$q^* \Vdash ``\tau$ is a bijection between $\widecheck \omega$ and $\widecheck{\omega_1^{\mathbf M}}$ with $\tau \notin \widecheck{(\leftidx{^{\omega}}\omega_1)^{\mathbf M}}$ and $\tau = \widecheck g$.''}\]
Thus, if $G^*$ is a $\mathcal P_{\rm pb}$-generic filter with $q^* \in G^*$, then $\tau$ evaluates to the bijection $g: \omega \rightarrow \omega_1$ in $\mathbf M$ which contradicts $\tau_{G^*} \notin \mathbf M$.
\end{proof}

\begin{remark}
In addition to preserving $\aleph_1$, forcing with $\mathcal P_{\rm pb}$ also preserves reals. To see this, in the above proof we can replace
\[\mbox{``\,}\tau \text{ is a bijection between } \widecheck \omega \text{ and } \widecheck{\omega_1^{\mathbf M}} \text{ with } \tau \notin \widecheck{(\leftidx{^{\omega}}\omega_1)^{\mathbf M}}\mbox{''}\]
with
\[\mbox{``\,}\tau \text{ is a function between } \widecheck \omega \text{ and } \widecheck 2 \text{ with } \tau \notin \widecheck{(^\omega 2)^{\mathbf M}}\mbox{.''}\]
\end{remark}

\section{Open Questions and Future Directions}

Our results on turbid groups and forcing posets are tailored specifically to add a basis to an $\aleph_1$-free group. Extending these results gives rise to some interesting questions for future research.

\subsubsection*{Characterizing Turbid Groups}

While the $\aleph_1$-free groups $H$ of cardinality $\aleph_1$ with $\Gamma(H)=[\emptyset]$ coincide with the free groups of cardinality $\aleph_1$, we are still missing a similarly succinct algebraic characterization for $\aleph_1$-free groups $H$ of cardinality $\aleph_1$ with $\Gamma(H)=[\aleph_1]$. Corollary \ref{charturbid}(b) is the best we have and a more intuitive algebraic characterization would promote a better understanding of turbid groups.

\begin{problem}
Provide a simple algebraic characterization for turbid groups that avoids the use of $\aleph_1$-filtrations and $\Gamma$-invariants.
\end{problem}

\subsubsection*{Groups of Larger Cardinality}

While Theorem \ref{mtheorem} provides a full answer to the question when a basis can be added to an $\aleph_1$-free group of cardinality $\aleph_1$ without cardinal collapse, this question still remains open for $\aleph_1$-free group of larger cardinality.

\begin{problem}
Let $\mathbf M$ be a transitive model of {\rm ZFC} with \[ \mathbf M \vDash \text{``$H$ is an $\aleph_1$-free group of size $\kappa>\aleph_1$.\!''}\]
Provide necessary and sufficient algebraic conditions on $H$ if $H$ cannot be free in any cardinality preserving transitive model extension $\mathbf N$ of $\mathbf M$.
\end{problem}

Alternatively, one might rather ask for necessary and sufficient conditions if $H$ cannot be free in any cardinality preserving forcing extension of a countable transitive ground model $\mathbf M$. Of course, adding a basis to $H$ results in a cardinal collapse if $H$ has a turbid subgroup. Are there any other restrictions?

This problem will need an adequate replacement for the $\Gamma$-invariant which is fit for groups of size $\kappa>\aleph_1$. It should be noted that the $\Gamma$-invariant easily extends to $\aleph_n$-free groups of cardinality $\aleph_n$ under ZFC and $\kappa$-free groups of cardinality $\kappa$ for regular $\kappa \ge \aleph_1$ under V=L. However, the defining set
\[E=\{\alpha < \kappa \mid H/{H_\alpha} \text{ is not } \kappa \text{-free} \}\]
will not be absolute anymore.

\subsubsection*{Splitting Off Direct Summands}

Closely related to adding a basis to an $\aleph_1$-free group of cardinality $\aleph_1$ is the task of splitting off a subgroup $K$ of an $\aleph_1$-free group $H$ of cardinality $\aleph_1$. Clearly, by the absoluteness of $\aleph_1$-freeness, $H/K$ being $\aleph_1$-free is a necessary requirement. If $|H/K| <\aleph_1$, then $H/K$ is free and $K\sqsubseteq H$ holds automatically in the ground model $\mathbf M$. If $H/K$ is non-turbid of cardinality~$\aleph_1$, Theorem~\ref{mtheorem} provides a cardinality preserving forcing extension $\mathbf N$ where $H/K$ is free and $K\sqsubseteq H$. This leaves us with the following intriguing question.

\begin{problem}
Let $\mathbf M$ be a countable transitive model of {\rm ZFC} with
\[ \mathbf M \vDash\! \text{``$H$\! is an $\aleph_1$-free group of size $\aleph_1$\! and $K\subseteq H$\! is a subgroup with $H/K$\! turbid.\!''}\]
Does there exist a cardinality preserving forcing extension $\mathbf N$ with $K\sqsubseteq H$ in $\mathbf N$?
\end{problem}

In particular, we are looking for a minimally invasive forcing extension that turns $K$ into a direct summand of $H$ without adding any basis to either summand.\medskip

All these questions around turbid groups and $\aleph_1$-free forcings are the subject of ongoing research.

\end{document}